\renewcommand{\epsilon}{\varepsilon}
\DeclareMathOperator{\Tr}{Tr}
\DeclareMathOperator{\sr}{s-rank}
\newcommand{\ketbra}[2]{\ket{#1}\!\bra{#2}}
\DeclareMathOperator*{\conv}{conv}
\DeclareMathOperator*{\capa}{capa}
\newcommand{\Sep}{\mathsf{Sep}}
\newcommand{\Herm}{\mathsf{Herm}}
\newcommand{\PSD}{\mathsf{PSD}}
\DeclareMathOperator*{\vspan}{span}
\DeclareMathOperator*{\vvec}{vec}
\newcounter{count}
\newcommand{\num}{\stepcounter{count}\the\value{count}}
\numberwithin{equation}{section}
\newtheorem{theorem}{Theorem}[section]
\newtheorem{lemma}[theorem]{Lemma}
\newtheorem{definition}[theorem]{Definition}
\newtheorem{proposition}[theorem]{Proposition}
\theoremstyle{definition}
\newtheorem{remark}[theorem]{Remark}
\newtheorem{example}[theorem]{Example}
\newcommand{\cF}{\mathcal{F}}
\newcommand{\cH}{\mathcal{H}}
\newcommand{\cK}{\mathcal{K}}
\newcommand{\cL}{\mathcal{L}}
\newcommand{\cM}{\mathcal{M}}
\newcommand{\cS}{\mathcal{S}}
\newcommand{\cV}{\mathcal{V}}
\title{Maximum Dimension of Subspaces with No Product Basis}
\author[Y.\ Yoshida]{Yuuya Yoshida}
\address{Yuuya Yoshida\\
Graduate School of Mathematics\\ Nagoya University\\ Furo-cho\\ Chikusa-ku\\ Nagoya\\ 464-8602\\ Japan}
\curraddr{}
\email{m17043e@math.nagoya-u.ac.jp}
\subjclass[2010]{Primary 15A69, Secondary 81P16 15B48.}
\keywords{product vector, product basis, general probabilistic theories, simultaneously distinguishable states, capacity.}
\begin{document}
\maketitle

\begin{abstract}
Let $n\ge2$ and $d_1,\ldots,d_n\ge2$ be integers, and $\mathcal{F}$ be a field. A vector $u\in\mathcal{F}^{d_1}\otimes\cdots\otimes\mathcal{F}^{d_n}$ is called a product vector if $u=u^{[1]}\otimes\cdots\otimes u^{[n]}$ for some $u^{[1]}\in\mathcal{F}^{d_1},\ldots,u^{[n]}\in\mathcal{F}^{d_n}$. A basis composed of product vectors is called a product basis. In this paper, we show that the maximum dimension of subspaces of $\mathcal{F}^{d_1}\otimes\cdots\otimes\mathcal{F}^{d_n}$ with no product basis is equal to $d_1d_2\cdots d_n-2$ if either (i) $n=2$ or (ii) $n\ge3$ and $\#\mathcal{F}>\max\{d_i : i\not=n_1,n_2\}$ for some $n_1$ and $n_2$. When $\mathcal{F}=\mathbb{C}$, this result is related to the maximum number of simultaneously distinguishable states in general probabilistic theories (GPTs).
\end{abstract}

\section{Introduction}\label{intro}

Quantum theory is described by operators on complex Hilbert spaces, 
and realizes quantum information processing beyond classical information processing.
Although quantum theory and related topics have been studied so far, 
a mathematical foundation of quantum theory does not suffice.
For example, when Alice and Bob have quantum systems, 
it is not known theoretically why the whole system of Alice and Bob is also described by quantum theory.
To answer such a question, 
some researchers study general probabilistic theories (GPTs), 
which are theoretical physical models defined on the basis of probability to obtain measurement outcomes, 
and contain quantum theory and classical probability theory \cite{Short,Janotta,Lami,Yoshida1,Muller,Masanes}.

Let $\cH(d_1,\ldots,d_n)$ be an $n$-partite complex Hilbert space $\mathbb{C}^{d_1}\otimes\cdots\otimes\mathbb{C}^{d_n}$, 
and $\tilde{d}$ be the dimension $d_1d_2\cdots d_n$ of $\cH(d_1,\ldots,d_n)$.
Each GPT has a unique number called \textit{capacity} (see Section~\ref{GPT}).
To derive the capacities of special GPTs, 
the following statement plays an important roll: 
\begin{itemize}
	\item[(S1)]
	for every unit vector $u\in\cH(d_1,\ldots,d_n)$, 
	the two matrices $I\pm\ketbra{u}{u}$ lie in the set 
	$\Sep(d_1,\ldots,d_n) \coloneqq \conv\{ X^{[1]}\otimes\cdots\otimes X^{[n]} : 
	X^{[1]},\ldots,X^{[n]}\text{ positive semi-definite} \}$,
\end{itemize}
where $I$ denotes the identity matrix on $\cH(d_1,\ldots,d_n)$ 
and $\conv(\cS)$ denotes the convex hull of a subset $\cS$.
Throughout this paper, 
we use the superscript $[j]$ (resp.\ $[k:l]$) to express the $j$th site (resp.\ the sites from $k$th to $l$th).
A matrix in $\Sep(d_1,\ldots,d_n)$ is called \textit{separable}.
Every separable matrix is positive semi-definite, 
but the converse does not necessarily hold.
If $n=2$, then (S1) is true for all integers $d_1,d_2\ge2$.
In fact, if $n=2$, then for all integers $d_1,d_2\ge2$, 
the following statement holds \cite{Gurvits}: 
\begin{itemize}
	\item[(S2)]
	the matrix $I+X$ lies in $\Sep(d_1,\ldots,d_n)$ 
	for every Hermitian matrix $X$ with $\|X\|_2 \coloneqq (\Tr X^\ast X)^{1/2}\le1$.
\end{itemize}
Statement (S2) is stronger than (S1) and does not hold in general \cite{Aubrun,Hildebrand}, 
but (S1) is still open for $n\ge3$ to the best of our knowledge.

Our main interest is whether (S1) holds for all integers $n\ge2$ and $d_1,\ldots,d_n\ge2$.
In this paper, we show the following statement weaker than (S1): 
for all integers $n\ge2$ and $d_1,\ldots,d_n\ge2$, 
\begin{itemize}
	\item[(S3)]
	every $(\tilde{d}-1)$-dimensional subspace $\cL$ of $\cH(d_1,\ldots,d_n)$ 
	has a product basis,
\end{itemize}
where a vector $u\in\cH(d_1,\ldots,d_n)$ is called a \textit{product vector} 
if $u=u^{[1]}\otimes\cdots\otimes u^{[n]}$ 
for some $u^{[1]}\in\mathbb{C}^{d_1},\ldots,u^{[n]}\in\mathbb{C}^{d_n}$; 
a basis composed of product vectors is called a \textit{product basis}.
In Section~\ref{proof}, we give a $(\tilde{d}-2)$-dimensional subspace with no product basis.
These results yield the following theorem.

\begin{theorem}\label{main0}
	For all integers $n\ge2$ and $d_1,\ldots,d_n\ge2$, 
	\[
	\max\Bigl\{ \dim\cL : 
	\begin{array}{l}
		\cL\text{ is a subspace of $\cH(d_1,\ldots,d_n)$ and}\\
		\text{has no product basis}
	\end{array}
	\Bigr\}
	= \tilde{d}-2.
	\]
\end{theorem}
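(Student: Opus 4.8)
My plan is to prove the equality by establishing the two matching bounds. Since $\cH(d_1,\ldots,d_n)$ itself has a product basis, it suffices to show: (a) every $(\tilde{d}-1)$-dimensional subspace has a product basis, which forces any subspace without a product basis to have dimension at most $\tilde{d}-2$; and (b) some $(\tilde{d}-2)$-dimensional subspace has no product basis, which makes the bound sharp. For (a), observe that a subspace $\cL$ of dimension $k$ has a product basis if and only if the product vectors contained in $\cL$ span $\cL$, and that a $(\tilde{d}-1)$-dimensional subspace is an orthogonal complement $v^\perp$ of a nonzero vector $v$; so (a) reduces to showing that for every nonzero $v$, the product vectors orthogonal to $v$ span $v^\perp$.

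I would prove this by induction on $n$, the case $n=1$ being trivial since every vector is then a product vector. For the step, write $\cH(d_1,\ldots,d_n)=\mathbb{C}^{d_1}\otimes\cK$ with $\cK=\cH(d_2,\ldots,d_n)$, and expand $v=\sum_{i=1}^{d_1}e_i\otimes k_i$ with $k_i\in\cK$. After a change of basis on $\mathbb{C}^{d_1}$ we may assume $v=\sum_{i=1}^{r}e_i\otimes k_i$ with $k_1,\ldots,k_r$ linearly independent, where $r=\dim\vspan\{k_1,\ldots,k_{d_1}\}$. For $j>r$ every vector $e_j\otimes y$ is orthogonal to $v$, and for $j\le r$ one has $e_j\otimes y\perp v$ exactly when $y\perp k_j$; applying the inductive hypothesis to the hyperplane $k_j^\perp\subseteq\cK$, the product vectors of these two shapes span the subspace $P=\bigl(\bigoplus_{j>r}e_j\otimes\cK\bigr)\oplus\bigl(\bigoplus_{j\le r}e_j\otimes k_j^\perp\bigr)$, which lies in $v^\perp$ with codimension $r-1$ there. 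To recover the remaining $r-1$ dimensions, for each $a=2,\ldots,r$ pick (again by the inductive hypothesis) a product vector $y_a\in\cK$ with $y_a\perp(k_1+k_a)$ and $\langle k_1,y_a\rangle\ne0$; such $y_a$ exists because otherwise all product vectors in the hyperplane $(k_1+k_a)^\perp$ would lie in the strictly smaller subspace $(k_1+k_a)^\perp\cap k_1^\perp$, contradicting the inductive hypothesis. Then $(e_1+e_a)\otimes y_a$ is a product vector orthogonal to $v$, and as $a$ ranges over $2,\ldots,r$ the images of $(e_1+e_a)\otimes y_a$ in $v^\perp/P$ are linearly independent (the $a$-th image has a nonzero component in the summand of $\cH/P\cong\bigoplus_{j\le r}e_j\otimes(\cK/k_j^\perp)$ indexed by $a$), so $P$ together with these vectors spans $v^\perp$; this finishes the induction. (Over $\mathbb{C}$ one can instead argue with the Segre variety $\Sigma$, which is irreducible and nondegenerate with Picard group $\mathbb{Z}^n$: a hyperplane section of $\Sigma$ is a reduced effective divisor in the class $(1,\ldots,1)$, so if the section cut by $v$ were contained in the one cut by some $w\notin\mathbb{C}v$, the two divisors would coincide, forcing the linear forms to be proportional — a contradiction; hence the product vectors in $v^\perp$ cannot lie in a proper subspace of it.)

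For (b), I would write the subspace down explicitly. Let $g=e_1\otimes\cdots\otimes e_1\in\cH(d_3,\ldots,d_n)$ (omitted when $n=2$), set $u_1=(e_2\otimes e_2)\otimes g$ and $u_2=(e_1\otimes e_2-e_2\otimes e_1)\otimes g$, and let $\cM=\{u_1,u_2\}^\perp$, of dimension $\tilde{d}-2$. Expanding in the standard product basis $\{e_{i_1}\otimes\cdots\otimes e_{i_n}\}$, one checks that every product vector $x^{[1]}\otimes\cdots\otimes x^{[n]}$ lying in $\cM$ has vanishing coefficient on each of the three basis vectors with index $(i_1,i_2,1,\ldots,1)$ for $(i_1,i_2)\in\{(1,2),(2,1),(2,2)\}$: if $x^{[j]}_1=0$ for some $j\ge3$ this is immediate, and otherwise the two defining conditions of $\cM$ reduce to the $2\times 2$ fact that a rank-one matrix $M$ with $M_{22}=0$ and $M_{12}=M_{21}$ can be nonzero only in its $(1,1)$ entry. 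Consequently all product vectors of $\cM$ lie in a subspace of dimension $\tilde{d}-3$, whereas $(e_1\otimes e_2+e_2\otimes e_1)\otimes g$ lies in $\cM$ but not in that subspace; hence $\cM$ has no product basis.

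I expect the inductive construction in (a) to be the main obstacle: the delicate point is to verify that the product vectors produced at each stage span all of $v^\perp$ and not merely a proper subspace, which is exactly where the value of $r$ (equivalently, whether $v$ is itself a product vector) and the codimension bookkeeping must be tracked with care. Part (b) is then a routine computation.
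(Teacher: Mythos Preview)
Your argument is correct. Part~(b) is essentially the paper's own example: up to swapping $e_1\leftrightarrow e_2$ in the second factor, your $\cM^\perp$ coincides with $(\cL^{[1:2]})^\perp\otimes\ket{u_0^{[3:n]}}$ from Proposition~\ref{prop1}, and the verification that every product vector misses the three coordinates $(1,2,1,\ldots,1)$, $(2,1,1,\ldots,1)$, $(2,2,1,\ldots,1)$ is the same computation the paper carries out.

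Part~(a), however, takes a genuinely different route. The paper proves the stronger Proposition~\ref{main0'} for all $r$, and its inductive step hinges on Lemma~\ref{lem2}: a polynomial (Schwartz--Zippel--type) argument to find product vectors $v_1,\ldots,v_r$ that \emph{simultaneously} complete each of the $d_1$ tuples $(u_{k,l}^{[2:n]})_l$ to a basis of $\cH(d_2,\ldots,d_n)$. Your induction is tailored to the case $r=1$ and avoids the polynomial method entirely: after normalizing $v=\sum_{j\le r}e_j\otimes k_j$ with $k_1,\ldots,k_r$ independent, the product vectors of the form $e_j\otimes y$ already span a subspace $P\subset v^\perp$ of codimension $r-1$, and the missing directions are supplied explicitly by $(e_1+e_a)\otimes y_a$ with $y_a$ a product vector in $(k_1+k_a)^\perp\setminus k_1^\perp$, whose existence is a direct consequence of the inductive hypothesis applied to the hyperplane $(k_1+k_a)^\perp$. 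This is more elementary and transparent, though it does not extend to $r\ge2$, where the paper's polynomial device (finding a single product vector that works for all $k$ at once) seems hard to replace by such explicit choices. Your Segre-variety aside is a nice conceptual alternative, trading elementary bookkeeping for standard facts about divisors on $\prod\mathbb{P}^{d_i-1}$.
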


Theorem~\ref{main0} is still true even if the scalar field $\mathbb{C}$ is replaced with 
an arbitrary infinite field (Remark~\ref{infiniteF}).
The case of finite fields $\cF$ is also true 
if either (i) $n=2$ or 
(ii) $n\ge3$ and $\#\cF>\max\{d_i : i\not=n_1,n_2\}$ for some $n_1$ and $n_2$ (Theorem~\ref{main1}), 
where $\#\cF$ denotes the order of $\cF$.
We address the case of finite fields in Section~\ref{finiteF}.

Actually, existing studies often consider an \textit{orthogonal product basis} \cite{Horodecki,Bennett,DiVincenzo, Alon,Chen,Feng}, 
which is defined as an orthonormal basis composed of product vectors.
An orthogonal product basis of a subspace $\cL$ is called \textit{unextendible} 
if the orthogonal complement of $\cL$ contains no non-zero product vector.
Unextendible orthogonal product bases (UPBs; ``orthogonal'' is usually omitted) are used to construct bound entangled states \cite{Horodecki,Bennett,DiVincenzo}.
In particular, quantum information theory motivates us to find UPBs of the minimum possible number.
Alon and Lov\'asz \cite{Alon} proved that 
the minimum dimension of subspaces of $\cH(d_1,\ldots,d_n)$ with UPBs is equal to $d_1+\cdots+d_n-n+1$ 
unless either (i) $n=2$ and $2\in\{d_1,d_2\}$ or 
(ii) $d_1+\cdots+d_n-n+1$ is odd and at least one $d_i$ is even.
Moreover, the minimum dimension is strictly greater than $d_1+\cdots+d_n-n+1$ in cases (i) and (ii).
After their work, cases (i) and (ii) have been studied in more detail \cite{Feng,Chen}.

Finally, we state two statements similar to Theorem~\ref{main0}.
A subspace of $\cH(d_1,\ldots,d_n)$ containing no non-zero product vector 
is called \textit{completely entangled}.
Wallach \cite{Wallach} and Parthasarathy \cite{Parthasarathy} proved that 
for all integers $n\ge2$ and $d_1,\ldots,d_n\ge2$, 
\begin{equation}
\begin{split}
	&\quad \max\{ \dim\cL : \cL\text{ is a completely entangled subspace of }\cH(d_1,\ldots,d_n) \}\\
	&= \tilde{d} - (d_1+\cdots+d_n) + n-1.
\end{split}\label{eq5}
\end{equation}
Cubitt et al.\ \cite{Cubitt} proved that 
for all integers $d_1,d_2\ge2$ and $r\in[0,\min\{d_1,d_2\}-1]$, 
\begin{equation}
	\max\biggl\{ \dim\cL : 
	\begin{array}{l}
		\cL\text{ is a subspace of }\cH(d_1,d_2)\text{ satisfying that}\\
		\sr u\ge r+1\text{ for all non-zero }u\in\cL
	\end{array}
	\biggr\} \le (d_1-r)(d_2-r), \label{eq6}
\end{equation}
where $\sr u$ denotes the Schmidt rank of $u\in\cH(d_1,d_2)$, i.e., 
a unique number $k$ such that $u$ is expressed as $u=\sum_{i=1}^k u_i^{[1]}\otimes u_i^{[2]}$ 
with two orthogonal systems $(u_i^{[1]})_{i=1}^k$ and $(u_i^{[2]})_{i=1}^k$.
If $r=0,1$, then \eqref{eq6} has equality due to \eqref{eq5}.
Recently, Bag et al.\ \cite{Bag} constructed subspaces that achieve equality in \eqref{eq6} for all $d_1,d_2\ge4$ and $r=1,2,3$.
% If $u\in\cH(d_1,\ldots,d_n)$ is not any product vectors, 
% we say that $u$ is \textit{entangled}.
% For every entangled vector $u\in\cH(d_1,\ldots,d_n)$, 
% the matrix $\ketbra{u}{u}$ does not lie in $\Sep(d_1,\ldots,d_n)$ (see Proposition~\ref{S1S3}).
% Normalized entangled vectors are significant resources in quantum information processing.

\section{More general proposition and proof}\label{proof}

Let us begin with notational conventions.
A vector $u\in\mathbb{C}^d$ is expressed as a column vector.
Also, we use the bra-ket notation: for $u\in\mathbb{C}^d$, 
$\ket{u}$ and $\bra{u}$ denote the column vector $u$ and its conjugate transpose, respectively.
Hence, $\braket{\cdot|\cdot}$ gives the standard Hermitian inner product on $\mathbb{C}^d$, 
and $\ketbra{u}{u}$ is a rank-one orthogonal projection for every unit vector $u\in\mathbb{C}^d$.
Let $(e_i^{[j]})_{i=1}^{d_j}$ be the standard basis of $\mathbb{C}^{d_j}$ for $j=1,\ldots,n$.
Denote by $\vspan(\cS)$ the linear span of a subset $\cS$, 
and by $\cL^\bot$ the orthogonal complement of a subspace $\cL$.
Although product vectors have been already defined in the case $n\ge2$, 
all vectors are regarded as product vectors in the case $n=1$.

Now, we prove the following proposition which is more general than (S3).

\begin{proposition}\label{main0'}
	Let $n\ge1$ and $d_1,\ldots,d_n\ge2$ be integers, 
	and $r$ be an integer in the interval $[0, \min\{d_1,\ldots,d_n\}]$.
	If the dimension of a subspace $\cL$ of $\cH(d_1,\ldots,d_n)$ is greater than or equal to $\tilde{d}-r$, 
	then $\cL$ has a $(\tilde{d}-r^n)$-tuple $(u_i)_{i=1}^{\tilde{d}-r^n}$ of linearly independent product vectors.
\end{proposition}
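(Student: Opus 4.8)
The plan is to induct on $n$. For the base case $n=1$ the statement is trivial: every vector is a product vector, so any subspace $\cL$ with $\dim\cL\ge \tilde d-r = d_1-r$ automatically contains $d_1-r^1 = d_1-r$ linearly independent product vectors (namely any basis of $\cL$). For the inductive step, assume the result holds for $n-1$ factors and consider $\cH(d_1,\ldots,d_n)$ with a subspace $\cL$ of dimension at least $\tilde d - r$. Write $\cH(d_1,\ldots,d_n) = \cH(d_1,\ldots,d_{n-1})\otimes\mathbb{C}^{d_n}$, and think of vectors in $\cL$ via their ``coordinates'' in the last tensor slot: each $u\in\cL$ decomposes as $u=\sum_{i=1}^{d_n} v_i\otimes e_i^{[n]}$ with $v_i\in\cH(d_1,\ldots,d_{n-1})$.

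First I would produce many product vectors in the last factor. The key reduction is: for a fixed nonzero $w\in\mathbb{C}^{d_n}$, consider the ``slice'' $\cL_w \coloneqq \{\,x\in\cH(d_1,\ldots,d_{n-1}) : x\otimes w\in\cL\,\}$ — more precisely, project $\cL$ onto the quotient or restrict using the functional $\bra{w}$ in the last slot. One should argue, by a dimension count, that for a generic choice of $w$ (or for $d_n - r$ suitably chosen independent vectors $w_1,\ldots,w_{d_n-r}$, e.g. by a rank/codimension argument on how $\bra{w}$ acting on the $n$th slot cuts down $\cL$), the resulting subspace $\cL_{w_k}$ of $\cH(d_1,\ldots,d_{n-1})$ has dimension at least $\tilde d/d_n - r = d_1\cdots d_{n-1} - r$. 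Then the induction hypothesis applied to each $\cL_{w_k}$ yields $(d_1\cdots d_{n-1}) - r^{n-1}$ linearly independent product vectors in $\cH(d_1,\ldots,d_{n-1})$ sitting inside $\cL_{w_k}$; tensoring each with $w_k$ gives product vectors in $\cL$. Collecting over $k=1,\ldots,d_n-r$ produces $(d_n-r)\bigl((d_1\cdots d_{n-1}) - r^{n-1}\bigr)$ product vectors in $\cL$, which one checks expands to at least $\tilde d - r^n$ after also accounting for the ``$r$ leftover slices'' — and here I would also need product vectors coming from a complementary piece, since $(d_n-r)\bigl(D - r^{n-1}\bigr) = \tilde d - r^{n-1}d_n - rD + r^n$ where $D = d_1\cdots d_{n-1}$, which is short of $\tilde d - r^n$. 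So the naive count is not enough, and one must be cleverer: use the $r$ remaining independent directions in $\mathbb{C}^{d_n}$ together with the fact that the corresponding slices still have dimension at least $D - r$ (not just $D - r \cdot(\text{something})$), or more efficiently, realize $\cL$ relative to a flag $0 = W_0\subset W_1 \subset\cdots\subset W_{d_n} = \mathbb{C}^{d_n}$ and bound, for each $j$, $\dim\bigl(\cL\cap (\cH(d_1,\ldots,d_{n-1})\otimes W_j)\bigr) - \dim\bigl(\cL\cap(\cH(d_1,\ldots,d_{n-1})\otimes W_{j-1})\bigr)$ from below, applying the induction hypothesis on each graded piece with its own local value of $r$. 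The correct bookkeeping should telescope to exactly $\tilde d - r^n$.

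The main obstacle I expect is precisely this combinatorial bookkeeping: getting the count $\tilde d - r^n$ sharp rather than merely a weaker bound like $\tilde d - r^{n-1}d_n$. This forces the choice of slicing to be adapted to $\cL$ — one cannot take a generic $w$ uniformly; instead one needs, for each of the $d_n$ coordinate directions (chosen to be ``in general position'' with respect to $\cL$), a slice whose deficiency relative to $D$ is at most $r$ \emph{and} such that across all $d_n$ slices the total deficiency is at most $r$ as well (not $r$ per slice). Concretely, if $\dim\cL = \tilde d - r$, then after choosing a good basis $w_1,\ldots,w_{d_n}$ of $\mathbb{C}^{d_n}$, the slice deficiencies $\delta_k \coloneqq D - \dim\cL_{w_k}$ satisfy $\sum_k \delta_k \le r$ (a standard linear-algebra fact about sections of a subspace by a flag), so only $r$ of the $\delta_k$ can be nonzero and even those are small. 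Applying the induction hypothesis to each $\cL_{w_k}$ with parameter $\max(\delta_k, 0)$... — wait, the hypothesis needs $\delta_k \le \min\{d_i\}$, which holds — gives $D - \delta_k^{\,n-1} \ge D - r \cdot \delta_k^{\,n-2}\cdots$; summing and using convexity ($\sum \delta_k^{n-1} \le r^{n-1}$ when $\sum\delta_k\le r$, $\delta_k\ge 0$) yields $\sum_k (D - \delta_k^{n-1}) \ge d_n D - r^{n-1} = \tilde d - r^{n-1}$, still not $\tilde d - r^n$. So in fact I suspect the right statement to prove by induction is the stronger ``$\cL$ has a basis, or near-basis, by product vectors'' structured so that the exponent $r^n$ arises from an honest product structure on the $n$ deficiency-parameters — i.e. the correct invariant is a product $\prod r_j$ over sites, and one should track per-site deficiencies $(r_1,\ldots,r_n)$ with $\sum$ (or $\max$) constraints, concluding $\tilde d - \prod r_j$ linearly independent product vectors, then specialize. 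Nailing down which multi-parameter induction makes the exponent $n$ fall out cleanly is the crux; once the right induction hypothesis is formulated, each step is an elementary dimension count plus tensoring.
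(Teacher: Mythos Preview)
Your induction setup and slicing idea are on the right track and match the paper's opening moves, but the attempt has a genuine error and is missing the key idea that makes the count $\tilde d - r^n$ fall out.

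\textbf{The error.} The claim ``$\sum_k \delta_k \le r$ (a standard linear-algebra fact about sections of a subspace by a flag)'' is false for the slices $\cL_{w_k}=\{x: x\otimes w_k\in\cL\}$. What is true is that the \emph{graded pieces} of the filtration $\cL\cap(\cH(d_1,\ldots,d_{n-1})\otimes W_j)$ have deficiencies summing to at most $r$; but those graded pieces are quotients, not subspaces of the form $\{x: x\otimes w\in\cL\}$, and lifting an element of such a quotient back into $\cL$ gives a vector $\sum_{i\le j} x_i\otimes w_i$, not a product vector. For honest slices, each $\delta_k\le r$ but the sum can be as large as $d_n r$ (take $r=1$ and $\cL^\bot$ spanned by $\sum_j A_j\otimes e_j$ with the $A_j$ independent: then every nonzero slice has codimension exactly $1$). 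So your convexity computation, which would indeed overshoot to $\tilde d - r^{n-1}$, rests on a false premise; and the flag alternative produces the right dimensions but not product vectors.

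\textbf{The missing idea.} The paper slices along \emph{all} $d_1$ directions $u_k^{[1]}$, applies the induction hypothesis to each slice $\cL_k^{[2:n]}$ (codimension $\le r$) to get $\tilde d'-r^{n-1}$ linearly independent product vectors $u_{k,l}^{[2:n]}$, and then---this is the crux you did not find---uses a polynomial (Schwartz--Zippel type) argument to produce $r^{n-1}$ product vectors $v_s^{[2:n]}\in\cH(d_2,\ldots,d_n)$ that \emph{simultaneously} complete every one of the $d_1$ families $(u_{k,l}^{[2:n]})_l$ to a basis. One then slices in the opposite direction: for each $v_s^{[2:n]}$ there are at least $d_1-r$ independent vectors $v_{s,t}^{[1]}\in\mathbb{C}^{d_1}$ with $v_{s,t}^{[1]}\otimes v_s^{[2:n]}\in\cL$. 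The combined collection has size $d_1(\tilde d'-r^{n-1}) + r^{n-1}(d_1-r)=\tilde d - r^n$, and the simultaneous-complement property is exactly what drives the linear-independence verification. No multi-parameter induction is needed; the whole trick is that common product complement obtained from the integral-domain/nonvanishing-polynomial lemma.
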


To prove Proposition~\ref{main0'}, we need two lemmas.
The first one is basic in algebra, and 
the second one is proved by using the first one.

\begin{lemma}\label{lem1}
	Let $\cF$ be an infinite field, $n\ge1$ be an integer, 
	and $f(x_1,\ldots,x_n)$ be a polynomial over $\cF$.
	Then the following conditions are equivalent: 
	\begin{enumerate}
		\item
		$f(\alpha_1,\ldots,\alpha_n)=0$ for all $\alpha_1,\ldots,\alpha_n\in\cF$;
		\item
		$f(x_1,\ldots,x_n)=0$ as a polynomial.
	\end{enumerate}
\end{lemma}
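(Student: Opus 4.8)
The plan is to dispatch the trivial implication directly and prove the substantive one by induction on the number of variables $n$. The direction (2) $\Rightarrow$ (1) is immediate, since substituting any tuple $(\alpha_1,\ldots,\alpha_n)\in\cF^n$ into the zero polynomial yields $0$. Thus all the content lies in (1) $\Rightarrow$ (2), where the single-variable case carries the essential arithmetic and the inductive step is a routine reduction.

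For the base case $n=1$, I would invoke the standard fact that a nonzero polynomial of degree $m$ over a field has at most $m$ roots, which follows from the factor theorem and hence from the division algorithm in $\cF[x_1]$. If a nonzero univariate $f$ vanished at every element of $\cF$, it would possess at least $\#\cF$ distinct roots; as $\cF$ is infinite this would exceed its degree, a contradiction. Hence $f=0$ as a polynomial. This is precisely where both hypotheses are used: that $\cF$ is a field rather than merely a commutative ring, and that $\cF$ is infinite.

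For the inductive step, assuming the claim for $n-1$ variables, I would view $f$ as a polynomial in $x_n$ with coefficients in $\cF[x_1,\ldots,x_{n-1}]$, writing
\[
f(x_1,\ldots,x_n) = \sum_{k=0}^{m} g_k(x_1,\ldots,x_{n-1})\, x_n^k.
\]
Fixing an arbitrary tuple $(\alpha_1,\ldots,\alpha_{n-1})\in\cF^{n-1}$, the specialization $f(\alpha_1,\ldots,\alpha_{n-1},x_n)$ is a univariate polynomial vanishing for every $\alpha_n\in\cF$, so by the base case each coefficient $g_k(\alpha_1,\ldots,\alpha_{n-1})$ equals $0$. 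Since the tuple was arbitrary, every $g_k$ vanishes identically on $\cF^{n-1}$, and the inductive hypothesis forces each $g_k$ to be the zero polynomial, whence $f=0$.

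I expect no genuine obstacle here: the crux is merely the bound on the number of roots of a univariate polynomial. The one point demanding care is the passage from ``vanishes at all points'' to ``coefficients vanish'' in the inductive step, where one must quantify over all choices of the first $n-1$ coordinates before extracting a polynomial identity, rather than fixing them once. The infinitude of $\cF$ is indispensable and cannot be dropped, as the polynomial $x_1^q - x_1$ vanishes everywhere over the finite field with $q$ elements yet is nonzero; this is worth flagging, since the paper's treatment of finite fields must accordingly proceed by different means.
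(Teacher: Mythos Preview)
Your argument is correct and is precisely the standard textbook proof; the paper itself does not supply a proof but simply cites \cite[Theorem~2.19]{BasicAlgebra}, where essentially this same induction on $n$ (reducing to the root bound for univariate polynomials) is carried out. There is nothing further to compare.
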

\begin{proof}
	See \cite[Theorem~2.19]{BasicAlgebra}.
\end{proof}

\begin{lemma}\label{lem2}
	Let $m,n\ge1$, $d_1,\ldots,d_n\ge2$ and $r\in[1,\tilde{d}]$ be integers, and 
	let $(u_{k,l})_{l=1}^{\tilde{d}-r}$, $k=1,\ldots,m$, 
	be $(\tilde{d}-r)$-tuples of linearly independent vectors in $\cH(d_1,\ldots,d_n)$.
	Then there exist product vectors $v_1,\ldots,v_r\in\cH(d_1,\ldots,d_n)$ such that 
	\[
	\det[u_{k,1},\ldots,u_{k,\tilde{d}-r},v_1,\ldots,v_r] \not= 0
	\]
	for all $k=1,\ldots,m$.
\end{lemma}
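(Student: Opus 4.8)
The plan is to treat the coordinates of the sought vectors $v_1,\ldots,v_r$ as formal indeterminates, so that each determinant in the statement becomes a polynomial, and then to find a common non-vanishing point by reducing to a single auxiliary polynomial and applying Lemma~\ref{lem1}.

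Concretely, for each $i=1,\ldots,r$ and each site $j=1,\ldots,n$ I would introduce $d_j$ indeterminates $x^{[i,j]}_1,\ldots,x^{[i,j]}_{d_j}$, let $v_i^{[j]}$ be the column vector with these coordinates, and set $v_i=v_i^{[1]}\otimes\cdots\otimes v_i^{[n]}$. Every coordinate of $v_i$ is then a monomial of degree $n$ in these indeterminates (one factor from each site), so for each $k$ the quantity
\[
P_k \coloneqq \det[u_{k,1},\ldots,u_{k,\tilde d-r},v_1,\ldots,v_r]
\]
is a polynomial over $\mathbb{C}$ in the $r\sum_{j=1}^n d_j$ indeterminates $x^{[i,j]}_\ell$. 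It then suffices to exhibit one assignment of these indeterminates in $\mathbb{C}$ at which the single polynomial $P\coloneqq\prod_{k=1}^m P_k$ is non-zero: at such a point every $P_k$ is non-zero and the resulting $v_1,\ldots,v_r$ are product vectors by construction.

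By Lemma~\ref{lem1}, applied over the infinite field $\mathbb{C}$, it is enough to show that $P$ is not the zero polynomial; since $\mathbb{C}[x^{[i,j]}_\ell]$ is an integral domain, this reduces to showing that each $P_k\not\equiv0$. Fix $k$. The vectors $u_{k,1},\ldots,u_{k,\tilde d-r}$ are linearly independent, and the $\tilde d$ standard product vectors $e^{[1]}_{i_1}\otimes\cdots\otimes e^{[n]}_{i_n}$ span $\cH(d_1,\ldots,d_n)$; by the Steinitz exchange lemma I may pick $r$ of the latter, say $w_1,\ldots,w_r$, so that $u_{k,1},\ldots,u_{k,\tilde d-r},w_1,\ldots,w_r$ is a basis, hence has non-zero determinant. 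Each $w_i$ arises from the parametrization of $v_i$ under the assignment setting one coordinate equal to $1$ on each site and the rest to $0$; substituting these values into $P_k$ produces exactly that non-zero determinant, so $P_k\not\equiv0$.

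The whole argument is essentially bookkeeping once two observations are in place: the entries of a product vector are polynomials in the coordinates of its tensor factors (so determinants built from them are polynomials), and the simultaneous condition over $k=1,\ldots,m$ can be packaged as the non-vanishing of the product $P$. The only step requiring any thought is the base case $P_k\not\equiv0$, where the linear independence of the $u_{k,l}$ and the fact that the standard basis consists of product vectors enter; I expect this to be the substantive point, though it is short.
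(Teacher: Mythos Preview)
Your proof is correct and follows essentially the same approach as the paper's: parametrize the $v_i$ by the coordinates of their tensor factors, observe that each $P_k$ is a polynomial which is not identically zero because the standard basis vectors are product vectors that can complete $(u_{k,l})_l$ to a basis, multiply the $P_k$ together, and invoke Lemma~\ref{lem1} over the integral domain $\mathbb{C}[x^{[i,j]}_\ell]$. Your variable count $r\sum_{j=1}^n d_j$ is in fact cleaner than the paper's (which writes $n'=\tilde d^{\,r}$), and your explicit appeal to Steinitz exchange makes the non-vanishing of $P_k$ slightly more transparent, but the substance is identical.
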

\begin{proof}
	Let $(e_i)_{i=1}^{\tilde{d}}$ be the standard basis of $\cH(d_1,\ldots,d_n)$, 
	and let $n'=\tilde{d}^r$.
	Define the $m$ polynomials $f_k(x_1,\ldots,x_{n'})$ over $\mathbb{C}$ as 
	\begin{gather*}
		f_k(x_1,\ldots,x_{n'}) = \det[u_{k,1},\ldots,u_{k,\tilde{d}-r},v_1,\ldots,v_r]
		\quad(k=1,\ldots,m),\\
		v_i = v_i^{[1]}\otimes\cdots\otimes v_i^{[n]}\quad(i=1,\ldots,r),
	\end{gather*}
	where the variables $x_1,\ldots,x_{n'}$ correspond to 
	the $n'$ entries of $v_i^{[j]}$, $1\le i\le r$, $1\le j\le n$.
	Since $(u_{k,l})_{l=1}^{\tilde{d}-r}$, $k=1,\ldots,m$, are tuples of linearly independent vectors, 
	we have $f_k(\alpha_1,\ldots,\alpha_{n'})\not=0$ for some $\alpha_1,\ldots,\alpha_{n'}\in\mathbb{C}$ 
	corresponding to $v_1,\ldots,v_r\in\{e_1,\ldots,e_{\tilde{d}}\}$ 
	(note that $e_1,\ldots,e_{\tilde{d}}$ are product vectors).
	Therefore, for every $k=1,\ldots,m$, 
	the polynomial $f_k(x_1,\ldots,x_{n'})$ is not zero as a polynomial.
	Since the polynomial ring $\mathbb{C}[x_1,\ldots,x_{n'}]$ is an integral domain, 
	the product $f(x_1,\ldots,x_{n'}) \coloneqq \prod_{k=1}^m f_k(x_1,\ldots,x_{n'})$ is not also zero as a polynomial.
	Thus, Lemma~\ref{lem1} implies that 
	$f(\beta_1,\ldots,\beta_{n'})\not=0$ for some $\beta_1,\ldots,\beta_{n'}\in\mathbb{C}$.
	Taking the vectors $v_i^{[j]}$ corresponding to $\beta_1,\ldots,\beta_{n'}\in\mathbb{C}$, 
	we obtain desired product vectors $v_1,\ldots,v_r$.
\end{proof}

\begin{proof}[Proof of Proposition~$\ref{main0'}$]
	Since the case $r=0$ is clear, 
	we assume the condition $r\ge1$ in this proof.
	We show the proposition by induction on $n\ge1$.
	First, the case $n=1$ is trivial.
	Let $n\ge2$ and assume that the proposition is true for $n-1$.
	Then we show that the proposition is also true for $n$.
	Let the dimension of a subspace $\cL$ be greater than or equal to $\tilde{d}-r$.
	For some $w_1,\ldots,w_r\in\cL^\bot$, 
	the subspace $\cL$ can be expressed as 
	\[
	\cL = \{ u\in\cH(d_1,\ldots,d_n) : \forall i=1,\ldots,r,\ \braket{w_i|u}=0 \}.
	\]
	Now, take a basis $(u_k^{[1]})_{k=1}^{d_1}$ of $\mathbb{C}^{d_1}$ and set $\tilde{d}'=\tilde{d}/d_1$.
	Since the dimension of the subspace 
	\[
	\cL_k^{[2:n]} \coloneqq \{ u^{[2:n]}\in\cH(d_2,\ldots,d_n) : \forall i=1,\ldots,r,\ \braket{w_i|u_k^{[1]}\otimes u^{[2:n]}}=0 \}
	\]
	is greater than or equal to $\tilde{d}'-r$ for every $k=1,\ldots,d_1$, 
	the induction hypothesis implies that 
	$\cL_k^{[2:n]}$ has a $(\tilde{d}'-r^{n-1})$-tuple $(u_{k,l}^{[2:n]})_{l=1}^{\tilde{d}'-r^{n-1}}$ of linearly independent product vectors.
	Also, due to Lemma~\ref{lem2}, 
	we can take an $r^{n-1}$-tuple $(v_s^{[2:n]})_{s=1}^{r^{n-1}}$ of product vectors with the following condition: 
	\begin{equation}
		\forall k=1,\ldots,d_1,\ 
		\det[u_{k,1}^{[2:n]},\ldots,u_{k,\tilde{d}'-r^{n-1}}^{[2:n]},v_1^{[2:n]},\ldots,v_{r^{n-1}}^{[2:n]}] \not= 0.
		\label{eq1}
	\end{equation}
	Moreover, for every $s=1,\ldots,r^{n-1}$, 
	take a $(d_1-r)$-tuple $(v_{s,t}^{[1]})_{t=1}^{d_1-r}$ of linearly independent vectors in the subspace 
	\[
	\cL_s^{[1]} \coloneqq \{ u^{[1]}\in\mathbb{C}^{d_1} : \forall i=1,\ldots,r,\ \braket{w_i|u^{[1]}\otimes v_s^{[2:n]}}=0 \}.
	\]
	Note that the $r^{n-1}(d_1-r)$ vectors $v_{s,t}^{[1]}\otimes v_s^{[2:n]}$ are linearly independent.
	\par
	Let us show that the $\tilde{d}-r^n$ product vectors of $\cL$ 
	\begin{equation}
		u_k^{[1]}\otimes u_{k,l}^{[2:n]},\quad v_{s,t}^{[1]}\otimes v_s^{[2:n]}
		\quad\Bigl(
		\begin{array}{cc}
			1\le k\le d_1,& 1\le l\le\tilde{d}'-r^{n-1},\\
			1\le s\le r^{n-1},& 1\le t\le d_1-r
		\end{array}
		\Bigr) \label{eq9}
	\end{equation}
	are linearly independent.
	Suppose that $\tilde{d}-r^n$ scalars $\alpha_{k,l}$ and $\beta_{s,t}$ satisfy 
	\begin{equation}
		\sum_{k,l} \alpha_{k,l}u_k^{[1]}\otimes u_{k,l}^{[2:n]}
		+ \sum_{s,t} \beta_{s,t}v_{s,t}^{[1]}\otimes v_s^{[2:n]} = 0.
		\label{eq2}
	\end{equation}
	Since $(u_k^{[1]})_{k=1}^{d_1}$ is a basis of $\mathbb{C}^{d_1}$, 
	for all $s$ and $t$, 
	there exist scalars $\gamma_{s,t,k}$ such that 
	$v_{s,t}^{[1]} = \sum_k \gamma_{s,t,k}u_k^{[1]}$.
	Thus, \eqref{eq2} can be rewritten as follows: 
	\begin{align*}
		0 &= \sum_{k,l} \alpha_{k,l}u_k^{[1]}\otimes u_{k,l}^{[2:n]}
		+ \sum_{s,t,k} \beta_{s,t}\gamma_{s,t,k}u_k^{[1]}\otimes v_s^{[2:n]}\\
		&= \sum_k u_k^{[1]}\otimes\Bigl( \sum_l \alpha_{k,l}u_{k,l}^{[2:n]} + \sum_{s,t} \beta_{s,t}\gamma_{s,t,k}v_s^{[2:n]} \Bigr).
	\end{align*}
	Since $(u_k^{[1]})_{k=1}^{d_1}$ is a basis of $\mathbb{C}^{d_1}$, 
	we have 
	\[
	\sum_l \alpha_{k,l}u_{k,l}^{[2:n]} + \sum_{s,t} \beta_{s,t}\gamma_{s,t,k}v_s^{[2:n]} = 0
	\]
	for every $k=1,\ldots,d_1$.
	This and \eqref{eq1} imply that $\alpha_{k,l}=0$ for all $k$ and $l$.
	Thus, \eqref{eq2} turns to 
	$\sum_{s,t} \beta_{s,t}v_{s,t}^{[1]}\otimes v_s^{[2:n]} = 0$.
	Since the $r^{n-1}(d_1-r)$ vectors $v_{s,t}^{[1]}\otimes v_s^{[2:n]}$ are linearly independent, 
	it follows that $\beta_{s,t}=0$ for all $s$ and $t$.
	Therefore, the vectors \eqref{eq9} are linearly independent, 
	and the proposition is also true for $n$.
\end{proof}

Next, we construct a $(\tilde{d}-2)$-dimensional subspace with no product basis 
on the basis of the case $n=2$.

\begin{proposition}\label{prop1}
	For all integers $n\ge2$ and $d_1,\ldots,d_n\ge2$, 
	there exists a $(\tilde{d}-2)$-dimensional subspace of $\cH(d_1,\ldots,d_n)$ with no product basis.
\end{proposition}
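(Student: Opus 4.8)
The plan is to reduce the whole statement to one explicit bipartite construction. First note that a subspace $\cL$ fails to have a product basis precisely when the product vectors contained in $\cL$ span a \emph{proper} subspace of $\cL$: a product basis is nothing but a basis chosen among the product vectors of $\cL$, and every spanning set of vectors contains a basis. Next, identify $\cH(d_1,\dots,d_n)$ with $\mathbb{C}^{d_1}\otimes\mathbb{C}^{D}$, where $D=d_2\cdots d_n\ge 2$, via $u^{[1]}\otimes\cdots\otimes u^{[n]}\leftrightarrow u^{[1]}\otimes\bigl(u^{[2]}\otimes\cdots\otimes u^{[n]}\bigr)$. Under this identification every $n$-partite product vector is in particular a product vector for the cut $1\mid 2\cdots n$, so the span of the $n$-partite product vectors inside a subspace is contained in the span of its $(1\mid 2\cdots n)$-product vectors. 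Hence it suffices to produce, for arbitrary integers $a,b\ge 2$, an $(ab-2)$-dimensional subspace of $\mathbb{C}^{a}\otimes\mathbb{C}^{b}$ whose product vectors span a proper subspace; taking $a=d_1$ and $b=D$ then yields a $(\tilde d-2)$-dimensional subspace of $\cH(d_1,\dots,d_n)$ with no product basis.

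For the bipartite construction, fix bases of $\mathbb{C}^{a}$ and $\mathbb{C}^{b}$ and write a generic vector as $u=\sum_{i,j}c_{ij}\,e^{[1]}_i\otimes e^{[2]}_j$. Define linear functionals by $\phi_1(u)=c_{11}-c_{22}$, $\phi_2(u)=c_{21}$, $\phi_3(u)=c_{11}$ (these are well defined since $a,b\ge 2$), and set $\cL=\ker\phi_1\cap\ker\phi_2$. Evaluating $\phi_1,\phi_2,\phi_3$ on the three vectors $e^{[1]}_1\otimes e^{[2]}_1$, $e^{[1]}_2\otimes e^{[2]}_2$, $e^{[1]}_2\otimes e^{[2]}_1$ gives an invertible $3\times 3$ matrix, so $\phi_1,\phi_2,\phi_3$ are linearly independent; in particular $\dim\cL=ab-2$ and $\phi_3\notin\vspan\{\phi_1,\phi_2\}$.

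The heart of the matter is the claim that every product vector of $\cL$ also lies in $\ker\phi_3$. Indeed, for a product vector $v\otimes w\in\cL$ with $v=(v_i)_i$ and $w=(w_j)_j$ we have $\phi_1(v\otimes w)=v_1w_1-v_2w_2=0$ and $\phi_2(v\otimes w)=v_2w_1=0$. If $w_1=0$ then $\phi_3(v\otimes w)=v_1w_1=0$; if instead $v_2=0$, the first relation gives $v_1w_1=v_2w_2=0$, so again $\phi_3(v\otimes w)=0$. Consequently all product vectors of $\cL$ lie in $\cL\cap\ker\phi_3=\ker\phi_1\cap\ker\phi_2\cap\ker\phi_3$, a subspace of dimension $ab-3$; being contained in a proper subspace, they cannot span $\cL$, so $\cL$ has no product basis. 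Combined with the reduction above, this proves the proposition.

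I expect the only genuinely creative step to be locating the functionals $\phi_1,\phi_2,\phi_3$ — equivalently, exhibiting a codimension-two subspace of a single $2\times 2$ block all of whose rank-one (product) elements are proportional to one another, a kind of ``tangent pencil'' configuration, so that a third linear condition is forced automatically. Once such a configuration is found, the dimension count and the regrouping argument are entirely routine, and the regrouping works precisely because being a full product vector is more restrictive than being a product vector for a coarser partition.
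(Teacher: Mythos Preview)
Your argument is correct. The bipartite subspace $\ker\phi_1\cap\ker\phi_2$ coincides exactly with the paper's $\cL^{[1:2]}$ (the conditions $c_{11}=c_{22}$ and $c_{21}=0$ are precisely what the spanning set in \eqref{eqL12} imposes), and your observation that every product vector in it satisfies $c_{11}=0$ is the paper's orthogonality-to-$e_1^{[1]}\otimes e_1^{[2]}$ claim, just phrased via linear functionals rather than inner products. The only real difference lies in the passage to $n\ge 3$: the paper constructs a new subspace $\cL^{[1:2]}\otimes\vspan(u_0^{[3:n]})+\cH(d_1,d_2)\otimes\vspan(u_0^{[3:n]})^\bot$ and argues for it separately, whereas you bypass this by coarsening the partition---applying the bipartite construction directly to $\mathbb{C}^{d_1}\otimes\mathbb{C}^{d_2\cdots d_n}$ and using that $n$-fold product vectors form a subset of the $(1\mid 2\cdots n)$-product vectors. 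Your route is shorter and avoids a second case analysis; the paper's construction has the minor advantage that its subspace is written explicitly in terms of the original $n$-fold tensor factors.
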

\begin{proof}
	First, assuming $n=2$, 
	we show that the $(\tilde{d}-2)$-dimensional subspace 
	\begin{equation}
		\cL^{[1:2]} \coloneqq \vspan\bigl( \{ e_1^{[1]}\otimes e_1^{[2]} + e_2^{[1]}\otimes e_2^{[2]} \}
		\cup\{ e_i^{[1]}\otimes e_j^{[2]} : (i,j)\not=(1,1),(2,1),(2,2) \} \bigr)
		\label{eqL12}
	\end{equation}
	has no product basis.
	Take an arbitrary product vector $u=u^{[1]}\otimes u^{[2]}\in\cL^{[1:2]}$ with the expressions 
	$u^{[k]} = \sum_{i=1}^{d_k} \alpha_i^{[k]}e_i^{[k]}$, $\alpha_i^{[k]}\in\mathbb{C}$, $k=1,2$.
	Then $u$ is expressed in two ways: 
	\begin{align*}
		u &= \sum_{i,j} \alpha_i^{[1]}\alpha_j^{[2]}e_i^{[1]}\otimes e_j^{[2]}\\
		&= \beta(e_1^{[1]}\otimes e_1^{[2]} + e_2^{[1]}\otimes e_2^{[2]})
		+ \sum_{(i,j)\not=(1,1),(2,1),(2,2)} \alpha_i^{[1]}\alpha_j^{[2]}e_i^{[1]}\otimes e_j^{[2]}
		\quad(\exists\beta\in\mathbb{C}),
	\end{align*}
	where the second equality follows from the basis \eqref{eqL12} of $\cL^{[1:2]}$.
	This yields that 
	\[
	\alpha_1^{[1]}\alpha_1^{[2]}e_1^{[1]}\otimes e_1^{[2]}
	+ \alpha_2^{[1]}\alpha_1^{[2]}e_2^{[1]}\otimes e_1^{[2]}
	+ \alpha_2^{[1]}\alpha_2^{[2]}e_2^{[1]}\otimes e_2^{[2]}
	= \beta(e_1^{[1]}\otimes e_1^{[2]} + e_2^{[1]}\otimes e_2^{[2]}).
	\]
	Thus, it turns out that $\alpha_2^{[1]}\alpha_1^{[2]}=0$ and 
	$\alpha_1^{[1]}\alpha_1^{[2]} = \alpha_2^{[1]}\alpha_2^{[2]}$, 
	whence $\braket{e_1^{[1]}\otimes e_1^{[2]}|u} = \alpha_1^{[1]}\alpha_1^{[2]} = 0$.
	That is, $u$ is orthogonal to $e_1^{[1]}\otimes e_1^{[2]}$.
	However, the vector $e_1^{[1]}\otimes e_1^{[2]} + e_2^{[1]}\otimes e_2^{[2]}\in\cL^{[1:2]}$ 
	is not orthogonal to $e_1^{[1]}\otimes e_1^{[2]}$, 
	which implies that $\cL^{[1:2]}$ has no product basis.
	\par
	Next, consider the case $n\ge3$.
	We show that the $(\tilde{d}-2)$-dimensional subspace 
	\begin{equation*}
		\cL = \cL^{[1:2]}\otimes\vspan(u_0^{[3:n]}) + \cH(d_1,d_2)\otimes\vspan(u_0^{[3:n]})^\bot
	\end{equation*}
	has no product basis, 
	where $u_0^{[3:n]} \coloneqq e_1^{[3]}\otimes\cdots\otimes e_1^{[n]}$.
	Take an arbitrary product vector $u\in\cL$.
	Then $u$ is expressed as $u = u^{[1:2]}\otimes u_0^{[3:n]} + v^{[1:2]}\otimes v^{[3:n]}$ 
	with suitable vectors $u^{[1:2]}\in\cL^{[1:2]}$, $v^{[1:2]}\in\cH(d_1,d_2)$ and $v^{[3:n]}\in\vspan(u_0^{[3:n]})^\bot$.
	Since $u$ is a product vector, 
	so is $(I^{[1:2]}\otimes\bra{u_0^{[3:n]}})u = u^{[1:2]}$, 
	where $I^{[1:2]}$ denotes the identity matrix on $\cH(d_1,d_2)$.
	As already proved,  
	the product vector $u^{[1:2]}\in\cL^{[1:2]}$ is orthogonal to $e_1^{[1]}\otimes e_1^{[2]}$.
	Thus, $u$ is orthogonal to $e_1^{[1]}\otimes e_1^{[2]}\otimes u_0^{[3:n]}$.
	However, the vector $(e_1^{[1]}\otimes e_1^{[2]} + e_2^{[1]}\otimes e_2^{[2]})\otimes u_0^{[3:n]}\in\cL$ 
	is not orthogonal to $e_1^{[1]}\otimes e_1^{[2]}\otimes u_0^{[3:n]}$, 
	which implies that $\cL$ has no product basis.
\end{proof}

\begin{proof}[Proof of Theorem~$\ref{main0}$]
	Proposition~\ref{main0'} with $r=0,1$ and Proposition~\ref{prop1} yield the theorem immediately.
\end{proof}

\begin{remark}\label{infiniteF}
	Let us consider the case when 
	the scalar field $\mathbb{C}$ and the Hermitian inner product $\braket{v|u}=\sum_i \overline{v(i)}u(i)$ 
	are replaced with an arbitrary field $\cF$ 
	and the non-degenerate bilinear form $\braket{v,u}=\sum_i v(i)u(i)$, respectively.
	In this case, the proof of Proposition~\ref{prop1} works well.
	Moreover, if $\cF$ is infinite, 
	then the proof of Proposition~\ref{main0'} also works well 
	because (i) $\dim\cL+\dim\cL^\bot=d$ and $(\cL^\bot)^\bot=\cL$ for every subspace $\cL$ of $\cF^d${} 
	and (ii) Lemma~\ref{lem1} holds.
	The fact (i) is also true for every finite field $\cF${}, 
	but (ii) is false for every finite field 
	even if the polynomial $f(x_1,\ldots,x_n)$ is homogeneous.
	Nevertheless, a modified version of Lemma~\ref{lem1} holds for every finite field (see Lemma~\ref{lem1'}).
\end{remark}

Finally, we verify that (S1) implies (S3), 
which follows from the following proposition immediately.

\begin{proposition}\label{S1S3}
	For a subspace $\cL$ of $\cH(d_1,\ldots,d_n)$, consider the following conditions: 
	\begin{enumerate}
		\item
		the orthogonal projection $P_\cL$ onto $\cL$ lies in $\Sep(d_1,\ldots,d_n)$;
		\item
		$\cL$ has a product basis.
	\end{enumerate}
	The one direction ``$(1)\Rightarrow(2)$'' holds for all integers $n\ge2$ and $d_1,\ldots,d_n\ge2$ and subspaces $\cL$, 
	but the converse does not necessarily hold.
\end{proposition}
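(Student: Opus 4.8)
The plan is to establish the implication $(1)\Rightarrow(2)$ directly, and then to exhibit, for every $n\ge2$ and $d_1,\ldots,d_n\ge2$, a subspace that has a product basis but whose orthogonal projection is not separable.

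For $(1)\Rightarrow(2)$, suppose $P_\cL\in\Sep(d_1,\ldots,d_n)$. First I would rewrite $P_\cL$ as a sum of rank-one \emph{product} projections: writing it as a sum of matrices $X^{[1]}\otimes\cdots\otimes X^{[n]}$ with each $X^{[j]}$ positive semi-definite, then replacing each such factor by a sum of rank-one positive semi-definite matrices $\ketbra{v}{v}$ and distributing the tensor product over these sums, one obtains $P_\cL=\sum_{i=1}^{N}\ketbra{w_i}{w_i}$ with each $w_i$ a product vector. Now for every $x\in\cL^\bot$ we get $0=\bra{x}P_\cL\ket{x}=\sum_i\abs{\braket{x|w_i}}^2$, so $\braket{x|w_i}=0$ for all $i$; since $(\cL^\bot)^\bot=\cL$, this forces $w_i\in\cL$. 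Moreover $\vspan\{w_1,\ldots,w_N\}$ is the range of $\sum_i\ketbra{w_i}{w_i}$, hence equals the range $\cL$ of $P_\cL$; extracting a basis of $\cL$ from $w_1,\ldots,w_N$ yields a product basis.

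For the failure of $(2)\Rightarrow(1)$, I would treat $n=2$ first (with $d_1,d_2\ge2$ arbitrary, working inside the copy of $\mathbb{C}^2\otimes\mathbb{C}^2$ spanned by the first two standard basis vectors of each factor) and take
\[
\cL\coloneqq\vspan\bigl(\{\,u_1,\ u_2\,\}\bigr),\qquad u_1\coloneqq e_1^{[1]}\otimes e_1^{[2]},\quad u_2\coloneqq(e_1^{[1]}+e_2^{[1]})\otimes(e_1^{[2]}+e_2^{[2]}),
\]
which plainly has a product basis. Identifying $a\,u_1+b\,u_2\in\cL$ with the $2\times2$ matrix whose rows are $(a+b,b)$ and $(b,b)$, whose determinant equals $ab$, one sees that the only product vectors in $\cL$ are the scalar multiples of $u_1$ and of $u_2$. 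If $P_\cL$ were separable, then by the first part $P_\cL=\sum_i\ketbra{w_i}{w_i}$ with each $w_i\in\cL$ a product vector, whence $P_\cL=\alpha\ketbra{\hat u_1}{\hat u_1}+\beta\ketbra{\hat u_2}{\hat u_2}$ for some $\alpha,\beta\ge0$, where $\hat u_j$ denotes the normalization of $u_j$. Applying both sides to $\hat u_1\in\cL$, on which $P_\cL$ acts as the identity, gives $(\alpha-1)\hat u_1+\beta\braket{\hat u_2|\hat u_1}\hat u_2=0$; since $\hat u_1,\hat u_2$ are linearly independent and $\braket{\hat u_2|\hat u_1}\not=0$, this forces $\alpha=1$ and $\beta=0$, so $P_\cL$ has rank one, contradicting $\dim\cL=2$. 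Hence $P_\cL\notin\Sep(d_1,d_2)$. For $n\ge3$ I would use $\cL\otimes\vspan(u_0^{[3:n]})$ with $u_0^{[3:n]}\coloneqq e_1^{[3]}\otimes\cdots\otimes e_1^{[n]}$: it is spanned by the product vectors $u_1\otimes u_0^{[3:n]}$ and $u_2\otimes u_0^{[3:n]}$, while the map $Z\mapsto(I^{[1:2]}\otimes\bra{u_0^{[3:n]}})\,Z\,(I^{[1:2]}\otimes\ket{u_0^{[3:n]}})$ sends $\Sep(d_1,\ldots,d_n)$ into $\Sep(d_1,d_2)$ and carries the projection onto this subspace to $P_\cL$, so separability would again contradict the case $n=2$.

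The forward direction is essentially bookkeeping, once one observes that membership of a projection in $\Sep$ forces a rank-one product decomposition. The substantive content is the counterexample, whose crux is the verification that $\cL$ contains no product vectors beyond the two obvious ones, combined with the elementary fact that a rank-two orthogonal projection is never a nonnegative combination of two rank-one projections onto non-orthogonal vectors. I expect this last point to be the conceptual heart: the subspace $\cL$ above has a product basis but \emph{no orthogonal product basis} (any such basis would consist of scalar multiples of $u_1$ and $u_2$, which are not orthogonal), and it is the existence of an orthogonal product basis, not of a product basis, that would have placed $P_\cL$ in $\Sep$.
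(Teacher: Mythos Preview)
Your proof is correct. For $(1)\Rightarrow(2)$ you spell out exactly the range argument that the paper imports by citing \cite[Theorem~2]{Horodecki}, so there is no real difference there. For the counterexample you choose the \emph{same} two-dimensional subspace as the paper, but your non-separability argument is genuinely different. The paper computes $P_\cL$ explicitly (via Gram--Schmidt it equals $\ketbra{e_1^{[1]}}{e_1^{[1]}}\otimes\ketbra{e_1^{[2]}}{e_1^{[2]}}+\ketbra{u^{[1:2]}}{u^{[1:2]}}$ with $u^{[1:2]}=(e_1^{[1]}\otimes e_2^{[2]}+e_2^{[1]}\otimes e_1^{[2]}+e_2^{[1]}\otimes e_2^{[2]})/\sqrt{3}$) and then invokes the positive partial transpose criterion. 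You instead classify all product vectors in $\cL$ via the $2\times2$ determinant, feed this back into the rank-one decomposition from the first part, and derive a contradiction by applying the putative decomposition to $\hat u_1$. Your route is more elementary and self-contained (no PPT), and it makes the conceptual point explicit: $\cL$ has a product basis but no \emph{orthogonal} product basis, and only the latter would guarantee $P_\cL\in\Sep$. The paper's route is shorter if one is willing to quote PPT. Your reduction from $n\ge3$ to $n=2$ by contracting against $u_0^{[3:n]}$ is equivalent to the paper's tensoring with $\ketbra{u^{[3:n]}}{u^{[3:n]}}$.
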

\begin{proof}
	$(1)\Rightarrow(2)$.
	See \cite[Theorem~2]{Horodecki} 
	(which is only the case $n=2$, but the case $n\ge3$ are also proved in the same way).
	\par
	$(2)\not\Rightarrow(1)$.
	Let $n\ge2$ and $d_1,\ldots,d_n\ge2$ be integers.
	Choose $\cL$ as the subspace spanned by the two vectors 
	$e_1^{[1]}\otimes e_1^{[2]}\otimes u^{[3:n]}$ and 
	$(e_1^{[1]}+e_2^{[1]})\otimes(e_1^{[2]}+e_2^{[2]})\otimes u^{[3:n]}$, 
	where $u^{[3:n]}$ be an arbitrary unit product vector in $\cH(d_3,\cdots,d_n)$.
	Then $\cL$ has the product basis 
	composed of $e_1^{[1]}\otimes e_1^{[2]}\otimes u^{[3:n]}$ and $(e_1^{[1]}+e_2^{[1]})\otimes(e_1^{[2]}+e_2^{[2]})\otimes u^{[3:n]}$.
	Also, the orthogonal projection $P_\cL$ is equal to 
	\[
	P_\cL = (\ketbra{e_1^{[1]}}{e_1^{[1]}}\otimes\ketbra{e_1^{[2]}}{e_1^{[2]}} + \ketbra{u^{[1:2]}}{u^{[1:2]}})\otimes\ketbra{u^{[3:n]}}{u^{[3:n]}},
	\]
	where $u^{[1:2]}$ is the unit vector 
	$(e_1^{[1]}\otimes e_2^{[2]} + e_2^{[1]}\otimes e_1^{[2]} + e_2^{[1]}\otimes e_2^{[2]})/\sqrt{3}$.
	Since the matrix $\ketbra{e_1^{[1]}}{e_1^{[1]}}\otimes\ketbra{e_1^{[2]}}{e_1^{[2]}} + \ketbra{u^{[1:2]}}{u^{[1:2]}}$ is not separable 
	(you can use the positive partial transpose criterion), 
	the orthogonal projection $P_\cL$ is not also separable.
\end{proof}

\section{Case of finite fields}\label{finiteF}

As already stated in Remark~\ref{infiniteF}, 
Theorem~\ref{main0} holds for every infinite field.
In this section, we consider the case of finite fields.
Let $\cF$ be a finite field of order $q$, 
$\braket{\cdot,\cdot}$ be the non-degenerate bilinear form $\braket{v,u}=\sum_i v(i)u(i)$, 
$\tilde{d}$ be the dimension $d_1d_2\cdots d_n$ of $\cF^{d_1}\otimes\cdots\otimes\cF^{d_n}$, 
and $(e_i^{[j]})_{i=1}^{d_j}$ be the standard basis of $\cF^{d_j}$ for $j=1,\ldots,n$.
We denote by $\cL^\bot$ the orthogonal compliment of a subspace $\cL$ 
with respect to the non-degenerate bilinear form $\braket{\cdot,\cdot}$.

\begin{proposition}\label{main1''}
	Let $d_1,d_2\ge2$ be integers.
	Then every $(d_1d_2-1)$-dimensional subspace of $\cF^{d_1}\otimes\cF^{d_2}$ 
	has a product basis.
\end{proposition}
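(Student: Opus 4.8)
The plan is to reduce Proposition~\ref{main1''} to the analogue of Lemma~\ref{lem2} over a finite field. Fix a $(d_1d_2-1)$-dimensional subspace $\cL$ of $\cF^{d_1}\otimes\cF^{d_2}$. Since $\dim\cL+\dim\cL^\bot=d_1d_2$ (this holds over every field, finite or infinite), there is a single nonzero vector $w\in\cL^\bot$ with $\cL=\{u:\braket{w,u}=0\}$. A product basis of $\cL$ exists as soon as we can find $d_1d_2-1$ product vectors $u_1,\ldots,u_{d_1d_2-1}$, each annihilated by $w$, that are linearly independent; equivalently, we need to extend them by one product vector $v$ with $\det[u_1,\ldots,u_{d_1d_2-1},v]\ne0$, i.e.\ with $\braket{w,v}\ne0$. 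So the whole statement is equivalent to: for every nonzero $w\in\cF^{d_1}\otimes\cF^{d_2}$, the hyperplane $w^\bot$ is spanned by product vectors it contains. Rephrased once more: the set $\{u^{[1]}\otimes u^{[2]}:\braket{w,u^{[1]}\otimes u^{[2]}}=0\}$ must span $w^\bot$.

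First I would translate the pairing $\braket{w,u^{[1]}\otimes u^{[2]}}$ into bilinear-algebra language: writing $w$ as a $d_1\times d_2$ matrix $W$ over $\cF$, we have $\braket{w,u^{[1]}\otimes u^{[2]}}=(u^{[1]})^{\mathsf T}W\,u^{[2]}$. The product vectors inside $w^\bot$ are exactly the pairs $(a,b)\in\cF^{d_1}\times\cF^{d_2}$ with $a^{\mathsf T}Wb=0$, and I want the corresponding rank-one matrices $ab^{\mathsf T}$ to span the hyperplane $W^\bot$ in the space of $d_1\times d_2$ matrices. By a change of basis on each side (which preserves both ``product vector'' and ``hyperplane spanned by product vectors''), I may assume $W$ is in its canonical form under equivalence: $W=\sum_{i=1}^{r}E_{ii}$ for $r=\rank W$. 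So the problem decouples into a clean statement about the rank-$r$ identity-like matrix, uniform in the field, and I would handle two regimes: $r=1$ (where $W=E_{11}$ and $a^{\mathsf T}Wb=a_1b_1$, so the vanishing locus is $\{a_1=0\}\cup\{b_1=0\}$, which visibly contains enough rank-one matrices $E_{ij}$ with $(i,j)\ne(1,1)$ to span $E_{11}^\bot$) and $r\ge2$.

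For the main case $r\ge2$ the key observation is that $E_{11}^\bot$ contains, besides all $E_{ij}$ with $(i,j)\ne(1,1)$, rank-one matrices of the form $(e_1+e_2)(e_1-e_2)^{\mathsf T}$-type combinations; more generally I would argue that the span of $\{ab^{\mathsf T}: \sum_{i\le r}a_ib_i=0\}$ is all of $W^\bot$ by exhibiting, for each basis matrix $E_{ij}$ of the ambient matrix space, an explicit expression of it (or of $E_{ij}-E_{11}$, when $(i,j)=(1,1)$ must be avoided) as a $\cF$-linear combination of two rank-one matrices lying on the quadric $a^{\mathsf T}Wb=0$. The identity $E_{kk}-E_{ll}=(e_k+e_l)(e_k-e_l)^{\mathsf T}-(e_k+e_l)(e_l)^{\mathsf T}\cdot(\text{adjust})$ is the kind of thing that works, but the cleanest route is: for $(i,j)$ with $i\ne j$ the single rank-one matrix $e_ie_j^{\mathsf T}$ already lies on the quadric (its $(i,i)$ and $(j,j)$ ``diagonal'' contribution is zero since $i\ne j$), so all off-diagonal $E_{ij}$, $i\ne j$, are available; and for the diagonal ones, $E_{ii}-E_{jj}$ (which spans the traceless diagonal part, the intersection of the diagonal with $E_{11}^\bot$ when $r\ge2$) is a difference of two off-diagonal rank-ones via $(e_i+e_j)(e_i-e_j)^{\mathsf T}=E_{ii}-E_{jj}-E_{ij}+E_{ji}$, and each of $E_{ij},E_{ji}$ is itself on the quadric. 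Collecting: all $E_{ij}$ with $i\ne j$, plus all $E_{ii}-E_{jj}$, span exactly $W^\bot=\{M:M_{11}+\cdots+M_{rr}=\text{(the relevant linear functional)}\}$ when $r\ge2$, and this uses no division and no largeness of $\cF$, so it is valid for every finite field, indeed $\cF=\mathbb{F}_2$ included.

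The step I expect to be the genuine obstacle is bookkeeping the case $r\ge2$ so that it is uniform in $\cF$: I must make sure every rank-one matrix I invoke genuinely satisfies $a^{\mathsf T}Wb=0$ over the given field (no hidden assumption like $1+1\ne0$ or the existence of a nonsquare, which would sneak in if I used a generic quadric point), and I must verify the spanning count exactly —$\dim W^\bot=d_1d_2-1$, and the explicit product vectors I list should be checked to span a space of that dimension, not one less. Once the canonical-form reduction is in place this is a short finite computation, but it is the place where an off-by-one or a characteristic-2 degeneracy could bite, so I would write it out carefully rather than wave at it. I do not expect to need Lemma~\ref{lem1} or any infinite-field input here; the whole point of isolating the bipartite case is that it goes through by bare linear algebra over any field.
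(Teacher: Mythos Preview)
Your approach is essentially identical to the paper's: reduce $w$ to the canonical form $W=\sum_{i\le r}E_{ii}$ by an invertible change of basis on each tensor factor, then exhibit $d_1d_2-1$ independent rank-one matrices on the quadric $a^{\mathsf T}Wb=0$; the paper uses $(e_k-e_{k+1})\otimes\bigl(\sum_{i\le r}e_i\bigr)$ for the extra $r-1$ vectors where you use $(e_i+e_j)(e_i-e_j)^{\mathsf T}$, and both choices work over every field including $\mathbb{F}_2$. One bookkeeping correction for when you write it out: when $r<\min(d_1,d_2)$, the set ``all $E_{ij}$ with $i\ne j$, plus all $E_{ii}-E_{jj}$'' spans the trace-zero hyperplane, not $W^\bot=\{M:\sum_{i\le r}M_{ii}=0\}$, and $(e_i+e_j)(e_i-e_j)^{\mathsf T}$ fails to lie on the quadric when exactly one of $i,j$ is $\le r$; the fix is to take \emph{all} $E_{ij}$ with $(i,j)\notin\{(1,1),\ldots,(r,r)\}$ (this includes the diagonal $E_{ii}$ for $i>r$, which are on the quadric) together with $E_{ii}-E_{jj}$ for $i,j\le r$ only.
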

\begin{proof}
	Let $\cL$ be a $(d_1d_2-1)$-dimensional subspace of $\cF^{d_1}\otimes\cF^{d_2}$.
	Taking a non-zero $w\in\cL^\bot$, we have 
	$\cL = \{ u\in\cF^{d_1}\otimes\cF^{d_2} : \braket{w,u}=0 \}$.
	\par\setcounter{count}{0}
	\noindent\textbf{Step~\num.}
	Let us consider the case $w=w_r \coloneqq \sum_{i=1}^r e_i^{[1]}\otimes e_i^{[2]}$ 
	with $1\le r\le\min\{d_1,d_2\}$.
	Set $u_0^{[2]} = \sum_{i=1}^r e_i^{[2]}$.
	In this case, the $d_1d_2-1$ product vectors of $\cL$ 
	\begin{equation}
		e_i^{[1]}\otimes e_j^{[2]},\quad
		(e_k^{[1]} - e_{k+1}^{[1]})\otimes u_0^{[2]}\quad
		\bigl( (i,j)\not=(1,1),(2,2),\ldots,(r,r),\ 1\le k\le r-1 \bigr)
		\label{eq3}
	\end{equation}
	are linearly independent, which is proved as follows.
	First, it is easily checked that all the vectors \eqref{eq3} are orthogonal to $w_r$.
	Next, suppose that $d_1d_2-1$ scalars $\alpha_{i,j}$ and $\beta_k$ satisfy 
	\begin{equation}
		\sum_{(i,j)\not=(1,1),(2,2),\ldots,(r,r)} \alpha_{i,j}e_i^{[1]}\otimes e_j^{[2]}
		+ \sum_{k=1}^{r-1} \beta_k(e_k^{[1]} - e_{k+1}^{[1]})\otimes u_0^{[2]}
		= 0. \label{eq8}
	\end{equation}
	Taking the inner product of \eqref{eq8} and $e_l^{[1]}\otimes e_l^{[2]}$ for $l=1,\ldots,r$, 
	we obtain that $\beta_1=0$ and $\beta_l - \beta_{l-1}=0$ for all $l=2,\ldots,r$.
	Thus, all $\beta_k$ are zero.
	Since the vectors $e_i^{[1]}\otimes e_j^{[2]}$, $(i,j)\not=(1,1),(2,2),\ldots,(r,r)$, are linearly independent, 
	all $\alpha_{i,j}$ are also zero.
	Therefore, the vectors \eqref{eq3} are linearly independent, 
	and $\cL$ has a product basis.
	\par
	\noindent\textbf{Step~\num.}
	Let us reduce the case of general $w$ to Step~1.
	For a matrix $A=(\alpha_{i,j})\in\cF^{d_1\times d_2}$, 
	define the vector $\vvec(A)\in\cF^{d_1}\otimes\cF^{d_2}$ as 
	$\vvec(A)=\sum_{i,j} \alpha_{i,j}e_i^{[1]}\otimes e_j^{[2]}$.
	Then $\vvec(PAQ^\top)=(P\otimes Q)\vvec(A)$ 
	for all matrices $A\in\cF^{d_1\times d_2}$, $P\in\cF^{d_1\times d_1}$ and $Q\in\cF^{d_2\times d_2}$, 
	where $Q^\top$ denotes the transpose of $Q$.
	Now, express $w$ as $w=\vvec(A)$ with $A\in\cF^{d_1\times d_2}$.
	For $r=0,1,\ldots,\min\{d_1,d_2\}$, 
	define the matrix $B_r=(\beta_{i,j})\in\cF^{d_1\times d_2}$ as 
	$\beta_{i,j}=1$ if $(i,j)=(1,1),(2,2),\ldots,(r,r)$ and $\beta_{i,j}=0$ otherwise.
	Since $A$ can be factorized as $A=P^\top B_rQ$ 
	with an integer $r\in[1,\min\{d_1,d_2\}]$ and invertible matrices $P\in\cF^{d_1\times d_1}$ and $Q\in\cF^{d_2\times d_2}${}, 
	it follows that 
	\[
	w = \vvec(A) = (P^\top\otimes Q^\top)\vvec(B_r) = (P^\top\otimes Q^\top)w_r.
	\]
	Letting $(u_i)_{i=1}^{d_1d_2-1}$ be the product basis \eqref{eq3}, 
	we find that $((P^{-1}\otimes Q^{-1})u_i)_{i=1}^{d_1d_2-1}$ is a product basis of $\cL$.
\end{proof}

\begin{proposition}\label{main1'}
	Let $n\ge3$ and $d_1,\ldots,d_n\ge2$ be integers.
	If $q>\max\{d_i : i\not=n_1,n_2\}$ for some $n_1$ and $n_2$, 
	then every $(\tilde{d}-1)$-dimensional subspace of $\cF^{d_1}\otimes\cdots\otimes\cF^{d_n}$ 
	has a product basis.
\end{proposition}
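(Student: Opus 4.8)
The plan is to rerun the induction from the proof of Proposition~\ref{main0'} with $r=1$, but anchored at $n=2$ instead of the trivial case $n=1$, and with Lemma~\ref{lem1} replaced by its finite-field counterpart Lemma~\ref{lem1'}. Since both the assertion and the set $\{d_i:i\neq n_1,n_2\}$ are invariant under permuting the sites, I may relabel so that $n_1=n-1$ and $n_2=n$; the hypothesis then reads $q>d_i$ for every $i\in\{1,\ldots,n-2\}$. I prove, by induction on $n\ge2$, that under this hypothesis every $(\tilde{d}-1)$-dimensional subspace $\cL$ of $\cF^{d_1}\otimes\cdots\otimes\cF^{d_n}$ has a product basis. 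The base case $n=2$ is Proposition~\ref{main1''}, which holds for \emph{every} finite field with no restriction on $q$; this is exactly the reason the two sites $n_1,n_2$ never impose a condition on $q$. Only the inductive step $n-1\to n$, for $n\ge3$, needs an argument, and it is a line-by-line adaptation of the proof of Proposition~\ref{main0'}.

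For the inductive step, write $\cL=\{u:\braket{w,u}=0\}$ with $0\neq w\in\cL^\bot$, fix a basis $(u_k^{[1]})_{k=1}^{d_1}$ of $\cF^{d_1}$, and put $\tilde{d}'=\tilde{d}/d_1$. For each $k$ the subspace $\cL_k^{[2:n]}=\{u^{[2:n]}:\braket{w,u_k^{[1]}\otimes u^{[2:n]}}=0\}$ has dimension at least $\tilde{d}'-1$, and since the exempt sites $n-1,n$ and the conditions $q>d_i$ ($i=2,\ldots,n-2$) survive the passage to sites $2,\ldots,n$, the induction hypothesis supplies a $(\tilde{d}'-1)$-tuple $(u_{k,l}^{[2:n]})_{l=1}^{\tilde{d}'-1}$ of linearly independent product vectors in $\cL_k^{[2:n]}$ (if this dimension equals $\tilde{d}'$, take any $\tilde{d}'-1$ of the standard product basis instead). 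The only place where finiteness of $\cF$ bites is the analogue of Lemma~\ref{lem2}: I need a single product vector $v^{[2:n]}=v^{[2]}\otimes\cdots\otimes v^{[n]}$ with
\[
\det[u_{k,1}^{[2:n]},\ldots,u_{k,\tilde{d}'-1}^{[2:n]},v^{[2:n]}]\neq0\qquad(k=1,\ldots,d_1).
\]
Expanding each such determinant along its last column exhibits $f_k$, regarded as a polynomial in the $d_2+\cdots+d_n$ coordinates of $v^{[2]},\ldots,v^{[n]}$, as a linear combination of the multilinear monomials $v^{[2]}(a_2)\cdots v^{[n]}(a_n)$; hence every coordinate occurs in $f_k$ with degree at most $1$, so $f\coloneqq\prod_{k=1}^{d_1}f_k$ has degree at most $d_1\le q-1$ in each coordinate. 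Exactly as in Lemma~\ref{lem2}, evaluating $f_k$ at a standard basis vector $e_{b_2}^{[2]}\otimes\cdots\otimes e_{b_n}^{[n]}$ that completes $(u_{k,l}^{[2:n]})_l$ to a basis shows $f_k\neq0$ as a polynomial, whence $f\neq0$ as a polynomial; since every coordinate appears in $f$ with degree $\le q-1$, Lemma~\ref{lem1'} produces a point at which $f$ does not vanish, i.e.\ the required $v^{[2]},\ldots,v^{[n]}$. This is the main obstacle: over a finite field one can no longer clear the finitely many $f_k$'s by appealing to an infinite cardinality, so the per-coordinate degree bound $d_1\le q-1$ must genuinely hold, which is precisely what the hypothesis guarantees for the $n-2$ sites other than $n_1,n_2$.

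With $v^{[2:n]}$ fixed, I then choose a $(d_1-1)$-tuple $(v_t^{[1]})_{t=1}^{d_1-1}$ of linearly independent vectors in $\cL_1^{[1]}=\{u^{[1]}\in\cF^{d_1}:\braket{w,u^{[1]}\otimes v^{[2:n]}}=0\}$, which is possible over any field because $\cL_1^{[1]}$ is the kernel of a single linear functional on $\cF^{d_1}$; then the $d_1-1$ product vectors $v_t^{[1]}\otimes v^{[2:n]}$ are linearly independent. The $\tilde{d}-1$ product vectors $u_k^{[1]}\otimes u_{k,l}^{[2:n]}$ and $v_t^{[1]}\otimes v^{[2:n]}$ all lie in $\cL$, and the linear-independence computation in the proof of Proposition~\ref{main0'} --- expand a vanishing linear combination in the basis $(u_k^{[1]})_k$, use the displayed determinant inequalities to kill the first family, and then use the linear independence of the $v_t^{[1]}\otimes v^{[2:n]}$ to kill the second --- carries over word for word, since it used nothing about $\cF$. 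As $\dim\cL=\tilde{d}-1$, these vectors form a product basis of $\cL$, which completes the inductive step and hence the proof.
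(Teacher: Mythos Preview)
Your proof is correct and follows essentially the same approach as the paper: the paper's own proof simply says to reorder so that $d_1\le\cdots\le d_n$ with $q>d_{n-2}$ and then rerun the induction of Proposition~\ref{main0'} using Lemmas~\ref{lem1'} and \ref{lem2'} in place of Lemmas~\ref{lem1} and \ref{lem2}. You have written out exactly this argument in detail (anchoring at $n=2$ via Proposition~\ref{main1''}, peeling off site~$1$, and inlining the proof of Lemma~\ref{lem2'} with the degree bound $d_1\le q-1$), so there is no substantive difference.
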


Since Proposition~\ref{prop1} holds for every finite field 
(see Remark~\ref{infiniteF}), 
we obtain the following theorem.

\begin{theorem}\label{main1}
	Let $n\ge2$ and $d_1,\ldots,d_n\ge2$ be integers.
	If either (i) $n=2$ or 
	(ii) $n\ge3$ and $q>\max\{d_i : i\not=n_1,n_2\}$ for some $n_1$ and $n_2$, 
	then 
	\[
	\max\Bigl\{ \dim\cL : 
	\begin{array}{l}
		\cL\text{ is a subspace of $\cF^{d_1}\otimes\cdots\otimes\cF^{d_n}$ and}\\
		\text{has no product basis}
	\end{array}
	\Bigr\}
	= \tilde{d}-2.
	\]
\end{theorem}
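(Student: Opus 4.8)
The plan is to prove Theorem~\ref{main1} by assembling the pieces already in place. The upper bound $\le\tilde{d}-2$ comes immediately from Proposition~\ref{prop1}, which (as noted in Remark~\ref{infiniteF}) works over any field and hence produces a $(\tilde{d}-2)$-dimensional subspace of $\cF^{d_1}\otimes\cdots\otimes\cF^{d_n}$ with no product basis. So the content is the matching lower bound: every $(\tilde{d}-1)$-dimensional subspace \emph{does} have a product basis. For $n=2$ this is exactly Proposition~\ref{main1''}. For $n\ge3$ it is exactly Proposition~\ref{main1'}. Thus, once those two propositions are established, the theorem follows by combining them with Proposition~\ref{prop1}, just as Theorem~\ref{main0} was deduced from Propositions~\ref{main0'} and~\ref{prop1} in the complex case.

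The first real step, then, is Proposition~\ref{main1''}. I would follow the two-step structure suggested by the proof skeleton: first handle the ``diagonal'' defining functional $w_r=\sum_{i=1}^r e_i^{[1]}\otimes e_i^{[2]}$ by exhibiting the explicit product vectors $e_i^{[1]}\otimes e_j^{[2]}$ for $(i,j)\ne(1,1),\dots,(r,r)$ together with the $r-1$ differences $(e_k^{[1]}-e_{k+1}^{[1]})\otimes u_0^{[2]}$, and check linear independence by pairing against the $e_l^{[1]}\otimes e_l^{[2]}$. Then, in the second step, reduce an arbitrary $w$ to this normal form: writing $w=\vvec(A)$, use that any matrix over a field admits a rank factorization $A=P^\top B_r Q$ with $P,Q$ invertible and $B_r$ the partial identity of rank $r$; transporting the product basis of the $w_r$-orthogonal complement by $P^{-1}\otimes Q^{-1}$ gives a product basis of $\cL$. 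The key point is that this argument is purely linear-algebraic and uses nothing special about $\mathbb C$, so it goes through verbatim over any finite (indeed any) field.

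The second and harder step is Proposition~\ref{main1'}, the $n\ge3$ case over a finite field of order $q>\max\{d_i:i\ne n_1,n_2\}$. Here the induction-on-$n$ argument of Proposition~\ref{main0'} cannot be copied directly, because it relies on Lemma~\ref{lem1} (a nonzero polynomial over an infinite field has a nonvanishing point), which fails over finite fields — even for homogeneous polynomials. The natural fix is to replace Lemma~\ref{lem1} by a counting/degree bound in the spirit of the Schwartz--Zippel lemma: a nonzero polynomial of degree $<q$ in each variable, or of total degree $<q$, cannot vanish identically on $\cF^{n'}$. This is presumably the content of the promised Lemma~\ref{lem1'} referenced in Remark~\ref{infiniteF}. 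The role of the hypothesis $q>\max\{d_i:i\ne n_1,n_2\}$ is exactly to control the relevant degrees: after isolating two ``free'' tensor factors $n_1,n_2$ (which can be treated by the $n=2$ machinery of Proposition~\ref{main1''}), the remaining factors contribute product-vector coordinates linearly, and the determinant polynomials whose simultaneous non-vanishing one needs are multilinear in each of those remaining blocks with degree bounded by the corresponding $d_i$; so $q$ exceeding all those $d_i$ makes the finite-field Schwartz--Zippel bound applicable. Concretely I would: (1) relabel so that $n_1,n_2$ are the last two indices, bundle $\cF^{d_{n_1}}\otimes\cF^{d_{n_2}}$ as a single "large" factor, and bundle the other $n-2$ small factors; (2) peel off the small factors one at a time as in Proposition~\ref{main0'}, at each peeling invoking the finite-field analogue of Lemma~\ref{lem2} to choose the auxiliary product vectors $v_s$ so that finitely many determinants stay nonzero — this is where Lemma~\ref{lem1'} enters; (3) when only the large two-factor block remains, finish with Proposition~\ref{main1''}.

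The main obstacle I expect is exactly the degree bookkeeping in step~(2): one must verify that every determinant polynomial arising when a small factor $\cF^{d_i}$ is peeled off has degree strictly less than $q$ in the variables being chosen, so that the finite-field zero-set lemma applies and a good choice of product vectors exists. Getting the degree count right — in particular seeing that only the \emph{small} factors' dimensions enter the bound, because the two designated factors are absorbed into a single linear-algebra step rather than a polynomial one — is the crux, and it is what forces the precise hypothesis $q>\max\{d_i:i\ne n_1,n_2\}$ rather than a bound involving all of $d_1,\dots,d_n$. Once that estimate is in hand, the linear-independence verification of the resulting product vectors is the same tensor-decomposition bookkeeping as in the proof of Proposition~\ref{main0'} and carries no field-specific difficulty.
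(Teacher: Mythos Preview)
Your approach is correct and matches the paper's: Theorem~\ref{main1} is indeed assembled from Proposition~\ref{prop1} (valid over any field; note this gives the \emph{lower} bound $\ge\tilde d-2$ on the maximum, not the upper bound as you wrote) together with Propositions~\ref{main1''} and~\ref{main1'}, and your sketches of the latter two are faithful to the paper's arguments. In particular, for Proposition~\ref{main1'} the paper does precisely what you propose---run the induction of Proposition~\ref{main0'} with Lemmas~\ref{lem1'} and~\ref{lem2'} replacing Lemmas~\ref{lem1} and~\ref{lem2}, using $n=2$ (via Proposition~\ref{main1''}) as the base case---and the degree bookkeeping is simply that peeling off the factor $\cF^{d_i}$ invokes Lemma~\ref{lem2'} with $m=d_i$ determinants (each linear in every coordinate of $v$, so the product has degree $d_i$ in each variable), whence the requirement $q>d_i$ for each peeled factor $i\ne n_1,n_2$.
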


To prove Proposition~\ref{main1'}, 
we use the following lemmas instead of Lemmas~\ref{lem1} and \ref{lem2}.

\begin{lemma}\label{lem1'}
	Let $n\ge1$ be an integer, $f(x_1,\ldots,x_n)$ be a polynomial over $\cF$, 
	and $d_i$ be the degree of $f(x_1,\ldots,x_n)$ in $x_i$.
	If $q>\max\{d_1,\ldots,d_n\}$, 
	then the following conditions are equivalent: 
	\begin{enumerate}
		\item
		$f(\alpha_1,\ldots,\alpha_n)=0$ for all $\alpha_1,\ldots,\alpha_n\in\cF$;
		\item
		$f(x_1,\ldots,x_n)=0$ as a polynomial.
	\end{enumerate}
\end{lemma}
\begin{proof}
	See \cite[Theorem~2.19]{BasicAlgebra} 
	(where only Lemma~\ref{lem1} is proved but the proof works well for Lemma~\ref{lem1'}).
\end{proof}

\begin{lemma}\label{lem2'}
	Let $m\in[1,q-1]$, $n\ge1$ and $d_1,\ldots,d_n\ge2$ be integers, and 
	let $(u_{k,l})_{l=1}^{\tilde{d}-1}$, $k=1,\ldots,m$, 
	be $(\tilde{d}-1)$-tuples of linearly independent vectors in $\cF^{d_1}\otimes\cdots\otimes\cF^{d_n}$.
	Then there exists a product vector $v\in\cF^{d_1}\otimes\cdots\otimes\cF^{d_n}$ such that 
	\[
	\det[u_{k,1},\ldots,u_{k,\tilde{d}-1},v] \not= 0
	\]
	for all $k=1,\ldots,m$.
\end{lemma}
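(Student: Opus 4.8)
The plan is to mimic the proof of Lemma~\ref{lem2} but replace the appeal to Lemma~\ref{lem1} (which fails over finite fields) with Lemma~\ref{lem1'}, which requires controlling the degree of the relevant polynomials in each variable. First I would set up coordinates exactly as before: let $(e_i)_{i=1}^{\tilde d}$ be the standard basis of $\cF^{d_1}\otimes\cdots\otimes\cF^{d_n}$, write the sought product vector as $v=v^{[1]}\otimes\cdots\otimes v^{[n]}$ with $v^{[j]}=\sum_{i=1}^{d_j} x_i^{[j]}e_i^{[j]}$, and for each $k$ define $f_k=\det[u_{k,1},\ldots,u_{k,\tilde d-1},v]$ as a polynomial over $\cF$ in the $d_1+\cdots+d_n$ variables $x_i^{[j]}$. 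Expanding the determinant along the last column, $f_k$ is linear in the entries of $v$, i.e.\ linear in each coordinate of the tensor $v^{[1]}\otimes\cdots\otimes v^{[n]}$; since each such coordinate is the product $x_{i_1}^{[1]}\cdots x_{i_n}^{[n]}$, the polynomial $f_k$ has degree exactly $1$ in each variable $x_i^{[j]}$.

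Next I would take the product $f=\prod_{k=1}^m f_k$. Each $f_k$ is not the zero polynomial: plugging in $v=e_i$ for various $i$ recovers the determinants $\det[u_{k,1},\ldots,u_{k,\tilde d-1},e_i]$, and since $(u_{k,l})_{l=1}^{\tilde d-1}$ are linearly independent they extend to a basis by some $e_i$, so at least one such determinant is nonzero. The polynomial ring $\cF[x_i^{[j]}]$ is an integral domain, so $f\neq0$ as a polynomial. The degree of $f$ in each single variable $x_i^{[j]}$ is at most $m$, being a product of $m$ polynomials each of degree $\le1$ in that variable. Now apply Lemma~\ref{lem1'}: since $q>m$ (we assumed $m\le q-1$), and the per-variable degrees of $f$ are all $\le m<q$, the lemma gives some assignment $x_i^{[j]}=\beta_i^{[j]}\in\cF$ with $f(\beta)\neq0$, hence $f_k(\beta)\neq0$ for every $k$. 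Reading off $v^{[j]}=\sum_i\beta_i^{[j]}e_i^{[j]}$ yields the desired product vector $v$.

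The only genuine point requiring care — and the step I would flag as the main obstacle, modest as it is — is the degree bookkeeping needed to invoke Lemma~\ref{lem1'}: one must check that expanding $\det[u_{k,1},\ldots,u_{k,\tilde d-1},v]$ really does give something linear (not just polynomial) in each coordinate of $v$, and that multiplying $m$ of these raises the per-variable degree only to $m$, so that the hypothesis $q>\max_j\deg_{x_i^{[j]}}f$ is implied by $q>m$, i.e.\ by $m\le q-1$. This contrasts with Lemma~\ref{lem2}, where infinitude of $\cF$ made any degree control unnecessary, and it also explains why the statement is restricted to $r=1$ (a single product vector $v$): with $r\ge2$ the determinant would be of degree $r$ in each tensor coordinate, hence of degree $r$ (then $mr$ after the product) in each scalar variable, and the bound on $q$ would have to grow accordingly. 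Everything else is a verbatim transcription of the argument for Lemma~\ref{lem2}, with $\cF$ in place of $\mathbb{C}$ and Lemma~\ref{lem1'} in place of Lemma~\ref{lem1}.
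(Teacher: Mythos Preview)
Your proof is correct and follows the same approach as the paper's own proof: define $f_k$ as the determinant with a generic product vector in the last column, observe each $f_k$ is nonzero and has degree at most $1$ in every scalar variable, multiply to get $f$ of per-variable degree at most $m\le q-1$, and invoke Lemma~\ref{lem1'}. Your version is in fact slightly more explicit than the paper's about the degree bookkeeping and the variable count (the paper writes $\tilde d$ variables where it should be $d_1+\cdots+d_n$), and your closing remark on why the argument is confined to $r=1$ is a useful observation not spelled out in the paper.
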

\begin{proof}
	This proof is almost the same as the proof of Lemma~\ref{lem2}. 
	Define the $m$ polynomials $f_k(x_1,\ldots,x_{\tilde{d}})$ over $\cF$ as 
	\begin{gather*}
		f_k(x_1,\ldots,x_{\tilde{d}}) = \det[u_{k,1},\ldots,u_{k,\tilde{d}-1},v]
		\quad(k=1,\ldots,m),\\
		v = v^{[1]}\otimes\cdots\otimes v^{[n]},
	\end{gather*}
	where the variables $x_1,\ldots,x_{\tilde{d}}$ correspond to 
	the $\tilde{d}$ entries of $v^{[j]}$, $1\le j\le n$.
	Then, for every $k=1,\ldots,m$, 
	the polynomial $f_k(x_1,\ldots,x_{\tilde{d}})$ is not zero as a polynomial.
	Since the polynomial ring $\cF[x_1,\ldots,x_{\tilde{d}}]$ is an integral domain, 
	the product $f(x_1,\ldots,x_{\tilde{d}}) \coloneqq \prod_{k=1}^m f_k(x_1,\ldots,x_{\tilde{d}})$ is not also zero as a polynomial.
	Also, for every $i=1,\ldots,\tilde{d}$, 
	the degree of the $f(x_1,\ldots,x_{\tilde{d}})$ in $x_i$ is less than or equal to $m\le q-1$.
	Thus, Lemma~\ref{lem1'} implies that 
	$f(\beta_1,\ldots,\beta_{\tilde{d}})\not=0$ for some $\beta_1,\ldots,\beta_{\tilde{d}}\in\cF$.
	Taking the vectors $v^{[j]}$ corresponding to $\beta_1,\ldots,\beta_{\tilde{d}}\in\cF$, 
	we obtain a desired product vector $v$.
\end{proof}

\begin{proof}[Proof of Proposition~$\ref{main1'}$]
	Without loss of generality, we may assume that $2\le d_1\le\cdots\le d_n$ and $q>d_{n-2}$ ($\{n_1,n_2\}=\{n-1,n\}$ in this case).
	The proposition is proved in the same way as the proof of Proposition~\ref{main0'} 
	by using Lemmas~\ref{lem1'} and \ref{lem2'} instead of Lemmas~\ref{lem1} and \ref{lem2}.
\end{proof}

\section{Capacities of GPTs}\label{GPT}

In this section, we describe the framework of GPTs, define the capacities of GPTs, 
and prove a few statements on capacities briefly.
The content of this section except for Proposition~\ref{capa2} 
relies on \cite{Yoshida1} (framework of GPTs) and \cite{QIT} (capacities) essentially.
M\"uller et al.\ \cite{Muller} and Masanes and M\"uller \cite{Masanes} also discussed the capacities of GPTs 
(in settings different from \cite{QIT}), 
but we think that \cite{QIT} is easier to understand for mathematicians because of fewer assumptions.

A GPT is given by (i) a real Hilbert space $\cV$ equipped with an inner product $\braket{\cdot,\cdot}$, 
(ii) a proper cone $\cK$ of $\cV$, and 
(iii) a vector $u$ in the interior of the dual cone $\cK^\ast$, 
where a subset $\cK$ of $\cV$ is called a \textit{convex cone} if $\alpha u+\beta v\in\cK$ for all $u,v\in\cK$ and $\alpha,\beta\ge0$;  
a convex cone $\cK$ is called \textit{proper} if $\cK$ is closed, has an interior point, and satisfies $\cK\cap(-\cK)=\{0\}$; 
for a convex cone $\cK$, the \textit{dual cone} $\cK^\ast$ is defined as 
\[
\cK^\ast = \{ y\in\cV : \forall x\in\cK,\ \braket{y,x}\ge0 \}.
\]
We only consider finite dimensions $\cV$.
It is known that 
\begin{itemize}
	\item
	if $\cK$ is a non-empty closed convex cone, then $\cK^{\ast\ast}=\cK$;
	\item
	if $\cK$ is a proper cone, then so is $\cK^\ast$.
\end{itemize}
The vector $u$ is called a \textit{unit effect} and fixed for each GPT.
Given $n$ GPTs $(\cV^{[i]},\cK^{[i]},u^{[i]})$ describing subsystems like Alice and Bob's systems, 
a GPT $(\cV,\cK,u)$ describing the whole system must satisfy that 
(i) $\cV=\cV^{[1]}\otimes\cdots\otimes\cV^{[n]}$, 
(ii) $\cK_{\min} \subset \cK \subset \cK_{\max}$, and 
(iii) $u=u^{[1]}\otimes\cdots\otimes u^{[n]}$, 
where $\cK_{\min}$ and $\cK_{\max}$ are defined as 
\begin{align*}
	\cK_{\min} &= \conv\{ x^{[1]}\otimes\cdots\otimes x^{[n]} : 
	x^{[1]}\in\cK^{[1]},\ldots,x^{[n]}\in\cK^{[n]} \},\\
	\cK_{\max} &= \{ x\in\cV : 
	\forall y^{[1]}\in(\cK^{[1]})^\ast,\ldots,\forall y^{[n]}\in(\cK^{[n]})^\ast,\ 
	\braket{x, y^{[1]}\otimes\cdots\otimes y^{[n]}}\ge0 \}.
\end{align*}
Conditions (i)--(iii) are naturally derived by considering local operations and randomization.
It is important that $\cK$ cannot be determined uniquely 
because $\cK_{\min}$ and $\cK_{\max}$ are not equal to each other in general.

For a GPT $(\cV,\cK,u)$, define the \textit{state class} $\cS(\cK,u)$ and \textit{measurement class} $\cM(\cK^\ast,u)$ as 
\begin{equation}
\begin{split}
	\cS(\cK,u) &= \{ x\in\cK : \braket{x,u}=1 \},\\
	\cM(\cK^\ast,u) &= \biggl\{ (y_i)_{i=1}^m\text{ $m$-tuple of elements in }\cK^\ast : m\in\mathbb{N},\ \sum_{i=1}^m y_i=u \biggr\}.
\end{split}\label{eq7}
\end{equation}
In the above definition, we can use another proper cone $\cK'\subset\cK^\ast$ instead of $\cK^\ast$, 
but the condition $\cK'=\cK^\ast$ is imposed in usual.
An element in $\cS(\cK,u)$ (resp.\ $\cM(\cK^\ast,u)$) is called a \textit{state} (resp.\ \textit{measurement}).
Also, each $i$ in $(y_i)_{i=1}^m\in\cM(\cK^\ast,u)$ represents a measurement outcome.
When a state $x\in\cS(\cK,u)$ is measured by a measurement $(y_i)_{i=1}^m\in\cM(\cK^\ast,u)$, 
the probability to obtain each outcome $i=1,\ldots,m$ is given by $\braket{x,y_i}$.
Indeed, $(\braket{x,y_i})_{i=1}^m$ is a probability vector thanks to the definition \eqref{eq7}.

\begin{example}[Quantum system]
	Let $\Herm(d)$ be the set of all Hermitian matrices on $\mathbb{C}^d$, 
	$\PSD(d)$ be the set of all positive semi-definite matrices on $\mathbb{C}^d$, 
	and $I$ be the identity matrix on $\mathbb{C}^d$.
	When we equip $\Herm(d)$ with the Hilbert-Schmidt inner product $\braket{X,Y}=\Tr XY$, 
	the tuple $(\Herm(d),\PSD(d),I)$ is a GPT called \textit{$d$-level quantum system}.
	Note that $\PSD(d)$ is \textit{self-dual}, i.e., $\PSD(d)^\ast=\PSD(d)$.
	The state class $\cS(\PSD(d),I)$ is the set of all density matrices, 
	and the measurement class $\cM(\PSD(d),I)$ is the set of all positive-operator valued measures (POVMs).
\end{example}

\begin{example}[Locally quantum system]
	Given $d_i$-level quantum subsystems, $i=1,\ldots,n$, 
	a GPT $(\cV,\cK,u)$ describing the whole system is called a \textit{$(d_1,\ldots,d_n)$-level locally quantum system} 
	if (i) $\cV$ and $u$ are equal to $\Herm(\tilde{d})$ and the identity matrix on $\cH(d_1,\ldots,d_n)$ respectively, 
	and (ii) $\Sep(d_1,\ldots,d_n)\subset\cK\subset\Sep(d_1,\ldots,d_n)^\ast$.
	In this case, $\cK_{\min}=\Sep(d_1,\ldots,d_n)$ and $\cK_{\max}=\Sep(d_1,\ldots,d_n)^\ast$.
	For all integers $n\ge2$ and $d_1,\ldots,d_n\ge2$, 
	the two proper cones $\Sep(d_1,\ldots,d_n)$ and $\Sep(d_1,\ldots,d_n)^\ast$ are not equal to each other.
	Since the inclusion relation $\Sep(d_1,\ldots,d_n)\subset\PSD(\tilde{d})\subset\Sep(d_1,\ldots,d_n)^\ast$ holds, 
	the $\tilde{d}$-level quantum system is a $(d_1,\ldots,d_n)$-level locally quantum system.
\end{example}

We next define the capacity of a GPT.

\begin{definition}[Simultaneously distinguishable states]
	Let $(\cV,\cK,u)$ be a GPT.
	We say that $m$ states $x_1,\ldots,x_m\in\cS(\cK,u)$ are simultaneously distinguishable 
	if there exists a measurement $(y_j)_{j=1}^m\in\cM(\cK^\ast,u)$ such that 
	$\braket{x_i,y_j}=\delta_{i,j}$ for all $i,j=1,\ldots,m$, 
	where $\delta_{i,j}$ denotes the Kronecker delta.
\end{definition}

\begin{definition}[Capacity]
	For a GPT $(\cV,\cK,u)$, 
	the maximum number of simultaneously distinguishable states is called the capacity.
	We denote by $\capa(\cV,\cK,u)$ the capacity of a GPT $(\cV,\cK,u)$.
\end{definition}

For example, it is known that the capacity of $d$-level quantum system is equal to $d$.
As proved below, the capacity of each $(d_1,d_2)$-level locally quantum system is equal to $d_1d_2$.
This fact is found in \cite{QIT} (without proof).

\begin{proposition}\label{capa1}
	Let $d_1,d_2\ge2$ be integers.
	For every $(d_1,d_2)$-level locally quantum system, the capacity is equal to $d_1d_2$.
\end{proposition}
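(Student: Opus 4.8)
The goal is to show that for every $(d_1,d_2)$-level locally quantum system $(\cV,\cK,u)$—i.e., $\cV=\Herm(\tilde d)$, $u=I$, and $\Sep(d_1,d_2)\subset\cK\subset\Sep(d_1,d_2)^\ast$—the capacity equals $\tilde d=d_1d_2$. The plan is to prove the two inequalities separately.

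\textbf{Upper bound $\capa(\cV,\cK,u)\le\tilde d$.} Suppose $x_1,\ldots,x_m\in\cS(\cK,u)$ are simultaneously distinguishable, witnessed by a measurement $(y_j)_{j=1}^m\in\cM(\cK^\ast,I)$ with $\braket{x_i,y_j}=\Tr(x_iy_j)=\delta_{i,j}$. Since $\cK\subset\Sep(d_1,d_2)^\ast$ we get $\cK^\ast\supset\Sep(d_1,d_2)^{\ast\ast}=\Sep(d_1,d_2)\supset\PSD(\tilde d)$ is \emph{not} what I want directly; rather, the useful inclusion is $\cK\supset\Sep(d_1,d_2)$, hence $\cK^\ast\subset\Sep(d_1,d_2)^\ast$, and also $\cK\subset\Sep(d_1,d_2)^\ast\subset\PSD(\tilde d)$ fails—so let me instead use: $\cK\supset\Sep(d_1,d_2)$ gives every state $x_i\in\cK$ is a limit of... no. The clean route: from $\Sep(d_1,d_2)\subset\cK$ we get $\cK^\ast\subset\Sep(d_1,d_2)^\ast$; from $\cK\subset\Sep(d_1,d_2)^\ast$ we get $\Sep(d_1,d_2)\subset\cK^\ast$, and combining with $\PSD(\tilde d)\subset\Sep(d_1,d_2)^\ast$ and $\Sep(d_1,d_2)\subset\PSD(\tilde d)$ we see $x_i\in\PSD(\tilde d)$ is \emph{not} guaranteed. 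So the honest statement is $x_i\in\Sep(d_1,d_2)^\ast$ and $y_j\in\Sep(d_1,d_2)^\ast$ as well. The key point is that $\Sep(d_1,d_2)^\ast$ consists of \emph{block-positive} matrices, and in particular every $z\in\Sep(d_1,d_2)^\ast$ satisfies $\Tr z=\braket{z,I}\ge0$ with equality iff $z=0$ (since $I$ is interior to $\Sep(d_1,d_2)$ by statement (S1)/(S2), its dual is pointed with $I$ strictly positive on it). From $\sum_j y_j=I$ and each $\Tr y_j\ge0$ we get $\sum_j\Tr y_j=\tilde d$; since $\Tr x_i=1$ and $\Tr(x_iy_j)=\delta_{i,j}$, I want to conclude $m\le\tilde d$. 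The cleanest argument: the matrices $x_1,\ldots,x_m$ are linearly independent in $\Herm(\tilde d)$ because the dual functionals $y_1,\ldots,y_m$ separate them ($\Tr(x_iy_j)=\delta_{i,j}$ forces linear independence of the $x_i$); since $\dim\Herm(\tilde d)=\tilde d^2$ this only gives $m\le\tilde d^2$, which is too weak. So I do need the trace bound: $m=\sum_{i=1}^m\Tr(x_iy_i)\le\sum_{i=1}^m\sum_{j=1}^m\Tr(x_iy_j)=\sum_{i=1}^m\Tr(x_i I)$? No—$\sum_j\Tr(x_iy_j)=\Tr(x_i\sum_jy_j)=\Tr(x_iI)=\Tr x_i=1$, so actually $m=\sum_i\Tr(x_iy_i)\le\sum_i\sum_j\Tr(x_iy_j)=\sum_i 1=m$ gives nothing. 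Instead: $\tilde d=\Tr I=\sum_j\Tr y_j$, and I claim $\Tr y_j\ge\Tr(x_jy_j)\cdot$? Here is the right move: pick any density matrix $\rho$ interior; actually the standard proof uses $\Tr y_j\ge 1$ for each $j$. To see $\Tr y_j\ge1$: consider the maximally mixed state $\tau=I/\tilde d\in\Sep(d_1,d_2)\subset\cK$, so $\tau$ is a state, and $\braket{\tau,y_j}=\Tr y_j/\tilde d\ge0$—still not $\ge1$. Hmm. The genuine argument is that since $\Tr(x_jy_j)=1$ and $y_j$ is block-positive while $x_j$ is a block-positive normalized state, one shows $\Tr y_j\ge 1$; I will prove this by noting $x_j\in\Sep(d_1,d_2)^\ast$ means $x_j=P_j+$ correction... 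Actually the cleanest: every state $x\in\cS(\cK,u)$ with $\cK\subset\Sep^\ast$ and every $y\in\cK^\ast\subset\Sep^\ast$ satisfy, by statement (S1), $I+\lambda(x/\|x\|_2^{-1})$... I will instead invoke: for $x\in\Sep(d_1,d_2)^\ast$ a normalized state and any $0\le\mu\le1$, $I-\mu\,\hat x\in\Sep(d_1,d_2)$ for suitable normalization by (S1)? This is getting complicated; let me just commit to the structure and flag this as the obstacle.

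\textbf{Lower bound $\capa(\cV,\cK,u)\ge\tilde d$.} Take the standard product basis $\{e_i^{[1]}\otimes e_j^{[2]}\}$ of $\cH(d_1,d_2)$, put $x_{ij}=\ketbra{e_i^{[1]}\otimes e_j^{[2]}}{e_i^{[1]}\otimes e_j^{[2]}}$ and $y_{ij}=x_{ij}$; then each $x_{ij}\in\Sep(d_1,d_2)\subset\cK$ is a state ($\Tr x_{ij}=1$), each $y_{ij}\in\Sep(d_1,d_2)\subset\cK^\ast$ (using $\Sep\subset\Sep^\ast$, i.e. $\Sep$ is contained in its dual since separable matrices are block-positive), $\sum_{i,j}y_{ij}=I=u$, and $\braket{x_{ij},y_{kl}}=\delta_{ik}\delta_{jl}$. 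Hence these $\tilde d$ states are simultaneously distinguishable, giving $\capa\ge\tilde d$. This direction is routine.

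\textbf{Main obstacle.} The hard part is the upper bound, specifically establishing that a measurement operator $y_j$ with $\Tr(x_jy_j)=1$ for a normalized state $x_j$ must have $\Tr y_j\ge1$ when both lie in the larger cone $\Sep(d_1,d_2)^\ast$ (rather than in $\PSD(\tilde d)$, where this is immediate from $\Tr(x_jy_j)\le\|x_j\|_{\mathrm{op}}^{-1}$... no, from $y_j\ge\frac{\Tr(x_jy_j)}{\Tr x_j}\cdot$ Cauchy–Schwarz-type bounds that fail for block-positive operators). I expect to resolve this by exploiting (S1): for the normalized state $x_j$, some scaling $\hat x_j$ satisfies $I\pm\hat x_j\in\Sep(d_1,d_2)$, so pairing $I-\hat x_j$ with $y_j\in\Sep^\ast$ gives $\Tr y_j\ge\braket{\hat x_j,y_j}$, and then a short computation relating $\braket{\hat x_j,y_j}$ to $\Tr(x_jy_j)=1$ yields $\Tr y_j\ge1$; summing over $j$ and using $\sum_jy_j=I$ gives $\tilde d=\Tr I=\sum_j\Tr y_j\ge m$. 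I would present the argument in that order: lower bound first (quick), then the key lemma that $\Tr y_j\ge1$ via (S1), then sum to get the upper bound.
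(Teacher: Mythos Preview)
Your lower bound is exactly the paper's: the $d_1d_2$ product projections $\ketbra{e_i^{[1]}}{e_i^{[1]}}\otimes\ketbra{e_j^{[2]}}{e_j^{[2]}}$ lie in $\Sep(d_1,d_2)\subset\cK$ and in $\Sep(d_1,d_2)\subset\cK^\ast$, and they distinguish themselves.

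For the upper bound your target inequality $\Tr y_j\ge 1$ is the right one, and your plan (show $I-\hat x_j\in\Sep(d_1,d_2)$ for a suitable rescaling, pair with $y_j$, then sum $\sum_j\Tr y_j=\Tr I=\tilde d$) is exactly the skeleton of the paper's argument. The gap is the tool you invoke. Statement (S1) only says $I\pm\ketbra{u}{u}\in\Sep(d_1,d_2)$ for \emph{unit vectors} $u$, i.e.\ for rank-one projections. Your states $x_j$ lie in $\cK\subset\Sep(d_1,d_2)^\ast$ and need not be rank-one, nor even positive semi-definite, so (S1) does not give you $I-\hat x_j\in\Sep(d_1,d_2)$. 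Indeed, (S1) paired against a general $Y\in\Sep(d_1,d_2)^\ast$ only yields $\abs{\braket{u|Y|u}}\le\Tr Y$, i.e.\ the operator-norm bound $\nrm{Y}\le\Tr Y$; combined with H\"older this would need $\nrm{x_j}_1=\Tr x_j$ (or the analogue for $y_j$), which fails outside $\PSD(\tilde d)$. This is precisely why the paper's Proposition~\ref{capa2} imposes the extra hypothesis $\cK\subset\PSD(\tilde d)$ or $\cK\supset\PSD(\tilde d)$ when only (S1) is assumed.

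The correct input for Proposition~\ref{capa1} is the stronger statement (S2), which does hold for $n=2$ by Gurvits--Barnum: $I+X\in\Sep(d_1,d_2)$ whenever $\nrm{X}_2\le1$. From this the paper extracts the key lemma: for every $Y\in\Sep(d_1,d_2)^\ast$, taking $X=-Y/\nrm{Y}_2$ gives $0\le\Tr(I+X)Y=\Tr Y-\nrm{Y}_2$, so $\nrm{Y}_2\le\Tr Y$. Now both $x_j$ and $y_j$ lie in $\Sep(d_1,d_2)^\ast$ (using $\cK\subset\Sep^\ast$ and $\cK^\ast\subset\Sep^\ast$), and Cauchy--Schwarz in the Hilbert--Schmidt inner product gives
\[
1=\Tr(x_jy_j)\le\nrm{x_j}_2\,\nrm{y_j}_2\le(\Tr x_j)(\Tr y_j)=\Tr y_j,
\]
whence $m\le\sum_{j=1}^m\Tr y_j=\Tr I=d_1d_2$. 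So your outline becomes a complete proof once you replace (S1) by (S2) and insert the Hilbert--Schmidt Cauchy--Schwarz step.
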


Proposition~\ref{capa1} asserts that 
the capacities of locally quantum systems do not change in the bipartite case.
Another property on distinguishable states has been studied in \cite{Arai,Yoshida2}, 
which changes depending on locally quantum systems.

\begin{proof}[Proof of Proposition~$\ref{capa1}$]
	We use the fact that (S2) holds in the case $n=2$ \cite{Gurvits}.
	First, let us show that every $Y\in\Sep(d_1,d_2)^\ast$ satisfies $\|Y\|_2\le\Tr Y$.
	Since the case $Y=0$ is trivial, assume that $Y\in\Sep(d_1,d_2)^\ast$ is non-zero.
	Set $X=-Y/\|Y\|_2$. Then $I+X$ lies in $\Sep(d_1,d_2)$.
	Thus, $\Tr Y - \|Y\|_2 = \Tr(I+X)Y \ge 0$.
	\par
	Let $(\Herm(d_1d_2),\cK,I)$ be a $(d_1,d_2)$-level locally quantum system.
	Next, we show that the capacity is equal to $d_1d_2$.
	Suppose that $m$ states $\rho_1,\ldots,\rho_m\in\cS(\cK,I)$ are simultaneously distinguishable 
	by a measurement $(M_j)_{j=1}^m$.
	Then 
	\begin{equation}
	\begin{split}
		m &\overset{(a)}{=} \sum_{i=1}^m \Tr\rho_iM_i \le \sum_{i=1}^m \|\rho_i\|_2\,\|M_i\|_2
		\overset{(b)}{\le} \sum_{i=1}^m (\Tr\rho_i)(\Tr M_i)\\
		&\overset{(c)}{=} \sum_{i=1}^m \Tr M_i \overset{(d)}{=} \Tr I = d_1d_2,
	\end{split}\label{eq4}
	\end{equation}
	where $(a)$, $(b)$, $(c)$ and $(d)$ follow from the facts $\Tr\rho_iM_i=1$, $\rho_i,M_i\in\Sep(d_1,d_2)^\ast$, 
	$\Tr\rho_i=\Tr\rho_iI=1$ and $\sum_{i=1}^m M_i=I$, respectively.
	Therefore, the capacity is less than or equal to $d_1d_2$.
	Since the $d_1d_2$ states 
	\[
	\ketbra{e_i^{[1]}}{e_i^{[1]}}\otimes\ketbra{e_j^{[2]}}{e_j^{[2]}}\in\cS(\cK,I)
	\quad(1\le i\le d_1,\ 1\le j\le d_2)
	\]
	are simultaneously distinguishable 
	by the measurement $(\ketbra{e_i^{[1]}}{e_i^{[1]}}\otimes\ketbra{e_j^{[2]}}{e_j^{[2]}})_{i,j}\in\cM(\cK^\ast,I)$, 
	we find that the capacity is equal to $d_1d_2$.
\end{proof}

We have used (S2) with $n=2$ in the above proof, 
but (S2) is false in general \cite{Aubrun,Hildebrand}.
Instead of (S2), let us focus on (S1).
As already stated in Section~\ref{intro}, 
(S1) is still open for $n\ge3$ to the best of our knowledge.
Finally, assuming (S1), we derive the capacities of special locally quantum systems.

\begin{proposition}\label{capa2}
	Assume that (S1) is true for all integers $n\ge2$ and $d_1,\ldots,d_n\ge2$.
	Let $(\Herm(\tilde{d}),\cK,I)$ be a $(d_1,\ldots,d_n)$-level locally quantum system 
	satisfying either $\cK\subset\PSD(\tilde{d})$ or $\cK\supset\PSD(\tilde{d})$.
	Then $\capa(\Herm(\tilde{d}),\cK,I)$ is equal to $\tilde{d}$.
\end{proposition}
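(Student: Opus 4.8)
The plan is to prove Proposition~\ref{capa2} by splitting into the two cases $\cK\subset\PSD(\tilde d)$ and $\cK\supset\PSD(\tilde d)$, mirroring the structure of the proof of Proposition~\ref{capa1}, with the crucial extra input being that (S1)~$\Rightarrow$~(S3) as recorded in Proposition~\ref{S1S3} together with (S3) itself, which gives a product basis for any $(\tilde d-1)$-dimensional subspace. In both cases the lower bound $\capa\ge\tilde d$ is immediate: the $\tilde d$ states $\ketbra{e_{i_1}^{[1]}}{e_{i_1}^{[1]}}\otimes\cdots\otimes\ketbra{e_{i_n}^{[n]}}{e_{i_n}^{[n]}}$ lie in $\cK_{\min}=\Sep(d_1,\ldots,d_n)\subset\cK$, and the analogous product POVM lies in $\cK^\ast$ (it lies in $\Sep^\ast$, and in the first case also in $\PSD$, hence always in $\cK^\ast$ since $\Sep\subset\cK$ forces $\cK^\ast\subset\Sep^\ast$ and $\PSD\subset\cK^\ast$ when $\cK\subset\PSD$; in the second case $\cK\supset\PSD$ gives $\cK^\ast\subset\PSD\subset\Sep^\ast$, and the product effects are separable so they lie in $\cK^\ast$). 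So the content is the upper bound $\capa\le\tilde d$.

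For the case $\cK\subset\PSD(\tilde d)$: here $\cK^\ast\supset\PSD(\tilde d)^\ast=\PSD(\tilde d)$, and also $\cK^\ast\subset\Sep^\ast$. Suppose $m$ states $\rho_1,\ldots,\rho_m\in\cS(\cK,I)$ are simultaneously distinguished by $(M_j)_{j=1}^m\in\cM(\cK^\ast,I)$, so $\Tr\rho_iM_j=\delta_{i,j}$ and $\sum_j M_j=I$. Each $\rho_i$ and each $M_j$ is positive semi-definite (since $\cK\subset\PSD$ and $M_j\in\cK^\ast$; but here I actually want $\rho_i\in\cK\subset\PSD$ and $M_j\in\cK^\ast\subset\Sep^\ast\subset$ — wait, $\Sep^\ast\not\subset\PSD$). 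The cleaner route: since $\rho_i\in\PSD$ and $M_i\in\Sep^\ast$, apply the bound $\|M_i\|_2\le\Tr M_i$ for $M_i\in\Sep^\ast$ proved (for general $n$ it is not available, since (S2) fails), so instead I use (S1) directly. The correct argument is: if $m>\tilde d$ states were distinguishable, consider the span $\cL$ of $\rho_1,\ldots,\rho_{\tilde d}$ — no, they may be linearly independent already giving $m\le\tilde d$ if they are; the real obstruction when $m=\tilde d+1$ is more subtle. I will instead argue that distinguishability of $m$ states forces the $\rho_i$ to be linearly independent in $\Herm(\tilde d)$, hence $m\le\dim\Herm(\tilde d)=\tilde d^2$, which is too weak; so one genuinely needs the finer structure.

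The right approach: when $\cK\supset\PSD$, we have $\cK^\ast\subset\PSD$, so the $M_j$ are genuine POVM elements and the $\rho_i\in\cK$; pick for each $i$ a unit vector $\psi_i$ with $\braket{\psi_i|M_i|\psi_i}>0$ normalized appropriately — actually use that $\Tr\rho_iM_i=1$ and $\Tr\rho_iM_j=0$ for $j\ne i$. Since $\rho_i\in\Sep^\ast$ (as $\cK\subset\Sep^\ast$) and $M_j\in\PSD$, positivity $\Tr\rho_iM_j\ge0$ holds; the equalities $\Tr\rho_iM_j=0$ with $M_j\succeq0$ force $\mathrm{range}(M_j)\subset\ker\rho_i$... this is the engine. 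Let $\cL_j=\mathrm{range}(M_j)$; then $\sum_j M_j=I$ and mutual orthogonality relations force the $\cL_j$ to be ``almost'' a direct sum, and one extracts that if $m=\tilde d+1$ then some $M_j$ has rank forcing a product-vector / subspace contradiction via (S3). Concretely: I expect to reduce to showing that a $(\tilde d+1)$-outcome distinguishing measurement would produce a $(\tilde d-1)$-dimensional subspace whose orthogonal projection lies in $\cK\subset\PSD$ but whose complement is spanned by vectors incompatible with having a product basis — here (S1), via Proposition~\ref{S1S3}'s direction $(1)\Rightarrow(2)$, says $P_\cL\in\Sep$ implies $\cL$ has a product basis, and conversely (S1) guarantees $I\pm\ketbra uu\in\Sep$, which is what lets one place the relevant projections inside $\Sep\subset\cK$. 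The case $\cK\subset\PSD$ is handled dually by passing to $\cK^\ast\supset\PSD$ and to measurements, or simply by noting $\capa(\cV,\cK,u)=\capa(\cV,\cK^\ast,u)$ under the symmetry swapping states and measurements, reducing it to the previous case.

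The hard part will be making the extraction of the $(\tilde d-1)$-dimensional subspace from a hypothetical $(\tilde d+1)$-outcome distinguishing measurement rigorous, and correctly invoking (S1) — in the form ``$I\pm\ketbra uu\in\Sep(d_1,\ldots,d_n)$ for every unit $u$'' — to show the relevant projections sit inside $\cK$, so that the contradiction with Proposition~\ref{S1S3} (product basis of a subspace whose complementary rank-one piece is a generic, non-orthogonal product-incompatible vector, exactly as in the counterexample at the end of that proof) goes through. I will want a clean lemma: \emph{if (S1) holds, then for every subspace $\cL$ of dimension $\tilde d-1$, both $P_\cL$ and $I-P_\cL$ lie in $\Sep$}, which follows by writing $I-P_\cL=\ketbra uu$ for a unit $u$ and applying (S1) to get $I\pm\ketbra uu\in\Sep$, whence $P_\cL=\tfrac12\big((I+\ketbra uu)+(I-\ketbra uu)\big)-\ketbra uu$ — rather, $P_\cL=I-\ketbra uu=\tfrac12(I-\ketbra uu)+\tfrac12(I-\ketbra uu)$, and $I-\ketbra uu\in\Sep$ directly. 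With that lemma in hand the upper bound argument in each case reduces to a counting/orthogonality computation of the same flavour as \eqref{eq4}, and the assembled bounds give $\capa(\Herm(\tilde d),\cK,I)=\tilde d$.
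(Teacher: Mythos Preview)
Your proposal has a genuine gap: you never arrive at a working upper-bound argument, and the route you sketch through (S3) and Proposition~\ref{S1S3} does not close. The paper's proof is short and uses (S1) in a completely different way. The single missing idea is this: (S1) directly gives the \emph{operator-norm} bound $\|Y\|\le\Tr Y$ for every $Y\in\Sep(d_1,\ldots,d_n)^\ast$, since for any unit vector $u$ one has $\Tr Y\pm\braket{u|Y|u}=\Tr(I\pm\ketbra{u}{u})Y\ge0$. With this in hand the paper simply replaces the Cauchy--Schwarz step in \eqref{eq4} by the H\"older pairing $\abs{\Tr AB}\le\|A\|_1\|B\|$. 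If $\cK\subset\PSD(\tilde d)$ then $\rho_i\in\PSD$ gives $\|\rho_i\|_1=\Tr\rho_i$, while $M_i\in\cK^\ast\subset\Sep^\ast$ gives $\|M_i\|\le\Tr M_i$; if $\cK\supset\PSD(\tilde d)$ the roles swap, with $M_i\in\cK^\ast\subset\PSD$ and $\rho_i\in\cK\subset\Sep^\ast$. Either way $m\le\sum_i(\Tr\rho_i)(\Tr M_i)=\tilde d$. You were on the right track when you wrote ``use (S1) directly'' and then abandoned it.

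By contrast, your plan to ``extract a $(\tilde d-1)$-dimensional subspace from a hypothetical $(\tilde d+1)$-outcome distinguishing measurement'' and contradict Proposition~\ref{S1S3} is never made precise, and I do not see a mechanism by which a product-basis statement about subspaces of $\cH(d_1,\ldots,d_n)$ would obstruct the existence of $\tilde d+1$ distinguishable states in $\cS(\cK,I)\subset\Herm(\tilde d)$; these live in different spaces and Proposition~\ref{S1S3} points the wrong direction for a contradiction (it says separable projections \emph{have} product bases, not that they do not). Your lemma ``$P_\cL=I-\ketbra{u}{u}\in\Sep$'' is correct under (S1) but is not used. Finally, the claimed symmetry $\capa(\cV,\cK,u)=\capa(\cV,\cK^\ast,u)$ that you invoke to reduce one case to the other is not established in the paper and is not obviously true as stated (the unit effect $u$ is required to lie in the interior of the dual cone, a condition that is not symmetric under $\cK\leftrightarrow\cK^\ast$).
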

\begin{proof}
	First, let us show that every $Y\in\Sep(d_1,\ldots,d_n)^\ast$ satisfies $\|Y\|\le\Tr Y$, 
	where $\|\cdot\|$ denotes the operator norm.
	Due to (S1), it follows that 
	$\Tr Y \pm \braket{u |Y| u} = \Tr(I\pm\ketbra{u}{u})Y \ge 0$ 
	for every unit vector $u\in\cH(d_1,\ldots,d_n)$.
	Thus, $\|Y\| \le \Tr Y$.
	\par
	The remainder is almost the same as the proof of Proposition~\ref{capa1}.
	The difference between the proof of Proposition~\ref{capa1} and this proof is only \eqref{eq4}.
	We must change \eqref{eq4} as follows: 
	\begin{align*}
		m &= \sum_{i=1}^m \Tr\rho_iM_i \le
		\begin{cases}
			\sum_{i=1}^m \|\rho_i\|_1\,\|M_i\| & \cK\subset\PSD(\tilde{d}),\\
			\sum_{i=1}^m \|\rho_i\|\,\|M_i\|_1 & \cK\supset\PSD(\tilde{d})
		\end{cases}
		\\
		&\le \sum_{i=1}^m (\Tr\rho_i)(\Tr M_i)
		= \sum_{i=1}^m \Tr M_i = \Tr I = \tilde{d},
	\end{align*}
	where $\|\cdot\|_1$ denotes the trace norm.
\end{proof}

\section*{Acknowledgment}
The author was supported by JSPS KAKENHI Grant Number JP19J20161.

\end{document}